\def\figurename{Figure} 
\renewcommand{\fnum@figure}[1]{\figurename~\thefigure.}
\def\tablename{Table} 
\renewcommand{\fnum@table}[1]{\tablename~\thetable.}
\theoremstyle{definition}
\theoremstyle{remark}
\numberwithin{equation}{section}
\newtheorem{dfn}{Definition}
\newtheorem{corol}{Corollary}
\newtheorem{theo}{Theorem}
\newtheorem{lem}{Lemma}
\begin{document}
\title{\bfseries\scshape{Blowing-up solutions of a time-space fractional semi-linear equation with a structural damping and a  nonlocal in time nonlinearity }}
\author{\bfseries\scshape K. Bouguetof\thanks{E-mail address: khaoula.bouguetof@univ-tebessa.dz}\\
Laboratory of Mathematics, Informatics and Systems\\University of Larbi Tebessi, Tebessa, Algeria.
}
\date{}
\maketitle
\begin{abstract}
In this paper, we investigate the semilinear equation with a
 time-space fractional structural damping and a  nonlocal in time nonlinearity
\begin{equation*}
{\mathbf{D}}_{0|t}^{1+\alpha _{1}}u+(-\Delta)^\sigma u+(-\Delta )^{\delta}\mathbf{D}%
_{0|t}^{\alpha _{2}}u=I_{0|t}^{1-\gamma }|u|^{p},\hspace*{0.5cm}(t,x)\in
(0,\infty )\times \mathbb{R}^{N},
\end{equation*}%
where\thinspace $p>1,\alpha _{i},\gamma\in (0,1),\delta, \sigma \in (0,1)$\thinspace,\thinspace ${\mathbf{D}}_{0|t}^{\alpha _{i}}$\thinspace
is the Caputo fractional derivative and\thinspace $I_{0|t}^{1-\gamma }$%
\thinspace is the Riemann-Liouville fractional integral of order\thinspace $%
1-\gamma .$ We prove the non-existence of global solutions  if\thinspace 
\begin{equation*}
1<p\leqslant \frac{2(2+\alpha_1-\gamma) }{(\frac{\alpha_1+1}{\sigma} N+2\gamma-2\alpha_1-2)_+ }+1,
\end{equation*}
for any space dimension $N\geqslant 1.$ Then, we extend the result to the
system 
\begin{align*}
&{\mathbf{D}}_{0|t}^{1+\alpha _{1}}u+(-\Delta)^{\sigma_1} u+(-\Delta )^{\delta _{1}}{\mathbf{D}}%
_{0|t}^{\alpha _{2}}u=I_{0|t}^{1-\gamma _{1}}|v|^{p},\hspace*{0.5cm}%
(t,x)\in (0,\infty )\times \mathbb{R}^{N}, \\ 
&{\mathbf{D}}_{0|t}^{1+\beta _{1}}v+(-\Delta)^{\sigma_2} v+(-\Delta )^{\delta _{2}}{\mathbf{D}}%
_{0|t}^{\beta _{2}}v=I_{0|t}^{1-\gamma _{2}}|u|^{q},\hspace*{0.5cm}%
(t,x)\in (0,\infty )\times \mathbb{R}^{N},  
\end{align*}%
where\thinspace $p,q>1,0<\delta _{i},\sigma_i<1$\thinspace and\thinspace $\gamma
_{2}\in (0,1).$ Also, we present the
necessary conditions for the existence of local or global solutions.
\end{abstract}
\textit{Keywords  and phrase:} Cauchy problem,  time-space fractional derivatives, structural damping, nonexistence
\section{Introduction}

\hspace*{0.3cm} We consider the following Cauchy problem 
\begin{equation}
{\mathbf{D}}_{0|t}^{1+\alpha _{1}}u+(-\Delta)^{\sigma} u+(-\Delta )^{\delta }({}\mathbf{%
D}_{0|t}^{\alpha _{2}}u)=I_{0|t}^{1-\gamma }|u|^{p},\hspace*{0.5cm}%
(t,x)\in (0,\infty )\times \mathbb{R}^{N},
\label{a}
\end{equation}%
with the initial conditions
\begin{equation}\label{aa}
u(0,x)=u_{0}, \;u_{t}(0,x)=u_{1}(x),\hspace*{0.5cm}x\in \mathbb{R}^{N},%
\end{equation}
where\thinspace $u=u(t,x),p>1,0<\gamma<\alpha _{2}<\alpha _{1}<1,0<\sigma <\delta <1$%
\thinspace\ and\thinspace ${\mathbf{D}}_{0|t}^{\alpha }u=I_{0|t}^{1-\alpha
}u_{t}$, $I_{0|t}^{1-\alpha }$\thinspace\ is the Riemann-Liouville
fractional integral of order \thinspace $1-\alpha $\thinspace\  which is defined for $u\in C(0,t)$
as following
\begin{equation*}
I_{0|t}^{1-\alpha }u=\frac{1}{\Gamma (\alpha )}\int_{0}^{t}(t-\tau )^{-\alpha
}u(\tau )d\tau .
\end{equation*}%
The fractional Laplacien operator is defined as
\begin{equation*}
(-\Delta )^{\delta }u(t,x)=\frac{C(N,\delta)}{2}\int_{\mathbb{R}^N}\frac{2u(t,x)-u(t,x+y)-u(t,x-y)}{\vert y\vert^{N+2\delta}} dy,
\end{equation*}%
where $C(N,\delta)$ is a positive normalizing constant depending on $N$ and $\delta.$
 The term \thinspace $(-\Delta )^{\sigma}\mathbf{D%
}_{0|t}^{\alpha _{2}}u$\thinspace\ represents a generalized structural damping.\newline
\hspace*{0.3cm} For the semilinear damped wave equation
\begin{equation}\label{j}
u_{tt}-\Delta u+u_{t}=|u(t)|^{p},\hspace*{0.5cm}(t,x)\in (0,\infty )\times 
\mathbb{R}^{N}.
\end{equation}
Todorova and Yordanov\thinspace\cite{todorova2001critical} studied the global existence of mild solutions to $(\ref{j}),$ be it valid, if $p>1+\frac{2}{N}$ with $\Vert u_0\Vert$ and $\Vert u_1\Vert$  sufficiently small. In addition, they proved that the mild solution cannot  exist globally when $1<p< 1+\frac{2}{N}$ and  $\int u_i>0, i=0,1$.
 Here, the critical exponent of the damped wave equation $(\ref{j})$ is equal to the Fujita
critical exponent for\thinspace $u_{t}-\Delta u=|u|^{p}$.\newline
\hspace*{0.3cm}M. Kirane and N. Tatar\thinspace\cite{kirane2006nonexistence} studied the particular case
of\thinspace $(\ref{a})$ with $\alpha _{1}=\gamma =1$\thinspace and\thinspace $%
\delta =0$, they discussed the nonexistence of global solutions and
established conditions for which the problem has no local weak solution.\newline
\hspace*{0.3cm}D'Abbicco and Ebert\thinspace\cite{d2017global} considered a more
general case, they treated the following problem 
\begin{equation*}
u_{tt}+(-\Delta )^{\sigma }u+(-\Delta )^{\delta }u_{t}=|u(t)|^{p},\hspace*{%
0.5cm}(t,x)\in (0,\infty )\times \mathbb{R}^{N},
\end{equation*}%
where\thinspace $2\delta \leqslant \sigma .$ They proved that if $%
1<p\leqslant 1+\frac{2\sigma }{(N-2\delta )}$\thinspace , then the solution
blows-up in a finite time.
\newline
\hspace*{0.3cm} Motivated by the above results, the paper presents the results
for the nonexistence of global solutions to problem\thinspace $(\ref{a})-(\ref{aa})$%
\thinspace . The analysis is based on the \textit{test function method}.%
\newline
\hspace*{0.3cm}The rest of the paper is organized as follows: In section 2, we recall some definitions about fractional calculus which will be used in the sequel. In Section \thinspace 3\thinspace, we study the
absence of global weak solutions. In Section\thinspace 4\thinspace, we extend the previous results of Section 3 to a
system of semilinear coupled equations. In Section\thinspace\ 5\thinspace , we
establish the necessary conditions for local or global existence of problem\ $(\ref%
{a})-(\ref{aa})$.

\section{Preliminary}
\hspace*{0.3cm}In this section, we present some
results and basic properties of fractional calculus.  \\
\hspace*{0.3cm} For $0< \alpha< 1$ and $T>0$, the Riemann-Liouville derivatives of
 order $\alpha$ for a continous function $f$ are defined as
\begin{align*}
D^{\alpha}_{0\vert t}f(t)=\frac{1}{\Gamma(1-\alpha)}
\frac{d}{dt}\int_0^t
(t-\tau)^{-\alpha}f(\tau)~d\tau,\hspace*{0.3cm} D^{\alpha}_{t\vert T}f(t)=-\frac{1}{\Gamma(1-\alpha)}
\frac{d}{dt}\int_t^T
(\tau-t)^{-\alpha}f(\tau)~d\tau.
\end{align*}
\hspace*{0.3 cm} For $0<\alpha< 1$ and $f\in AC[0,T]$, the
Caputo derivatives of  fractional order $\alpha$ are
defined as
\begin{align*}
\textbf{D}^{\alpha}_{0\vert t} f(t)=
\frac{1}{\Gamma(1-\alpha)}\int_0^t
(t-\tau)^{-\alpha}f^{'}(\tau)~d\tau,\hspace*{0.3cm}\textbf{D}^{\alpha}_{t\vert T} f(t)=-
\frac{1}{\Gamma(1-\alpha)}\int_t^T
(\tau-t)^{-\alpha}f^{'}(\tau)~d\tau.
\end{align*}
\hspace*{0.3cm} Assume $ \textbf{D}^{\alpha}_{0\vert t} f\in L^1(0,T)$, $g\in C^1([0,T])$ and $g(T)=0$, then we have the following formula of integration by parts
\begin{align*}
\int_0^T g(t)\textbf{D}^{\alpha}_{0\vert t} f(t) dt=\int_0^T (f(t)-f(0))\textbf{D}^{\alpha}_{t\vert T} g(t) dt.
\end{align*}
\begin{lem}
\begin{enumerate}
\item \thinspace Suppose\thinspace $f\in C(0,\infty )$\thinspace,
\thinspace $p\geqslant q>0$, and \thinspace $D_{t|T}^{p-q}f(t)$ \thinspace exists, then for \thinspace $t>0$
\begin{equation}\label{nb}
D_{t|T}^{p}(I_{t|T}^{q}f)(t)=D_{t|T}^{p-q}f(t).
\end{equation}%
In particular, when $p=n$ we have
\begin{equation*}
D_{t|T}^{n}(I_{t|T}^{q}f)(t)=(-1)^{n}D_{t|T}^{n-q}f(t).
\end{equation*}

\item \thinspace Let \,$n-1\leqslant q<n$, then for every $t>0$, 
\begin{align}\label{bo}
I^{m-p}_{t\vert T}D^q_{t\vert T} f(t)= D^{p+q-m}_{t\vert T}f(t)-\sum_{k=1}^n (-1)^{n-k}\frac{(T-t)^{m-p-k}D^{q-k}_{t\vert T}f(T)}{\Gamma(q-k)\Gamma(m-k-p+1)}.
\end{align}
\end{enumerate}
\end{lem}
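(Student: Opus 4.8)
\quad The plan is to reduce both assertions to two elementary facts about the right-sided operators on $(0,T)$. The first is the semigroup law $I_{t|T}^{a}I_{t|T}^{b}g=I_{t|T}^{a+b}g$ for $a,b>0$: I would prove it by writing out the iterated integral, applying Fubini's theorem on the region $t<s<\tau<T$, and evaluating the inner integral $\int_{t}^{\tau}(s-t)^{a-1}(\tau-s)^{b-1}\,ds=(\tau-t)^{a+b-1}B(a,b)$ via the substitution $s=t+u(\tau-t)$. The second is the representation
\[
D_{t|T}^{\mu}g=(-1)^{k}\frac{d^{k}}{dt^{k}}\,I_{t|T}^{\,k-\mu}g,
\]
valid for \emph{every} integer $k\geqslant\mu$ (not only $k=\lceil\mu\rceil$); the independence of $k$ follows from the semigroup law together with $\frac{d}{dt}I_{t|T}^{1}g=-g$, so that $\frac{d^{j}}{dt^{j}}I_{t|T}^{j}$ equals $(-1)^{j}$ times the identity operator. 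One may alternatively note that $t\mapsto T-t$ conjugates every right-sided operator on $(0,T)$ to the corresponding left-sided Riemann--Liouville operator, so that the lemma is the mirror image of classical identities; but I prefer to argue directly.

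For part (1), pick an integer $n\geqslant p$ and compute, using the representation and then the semigroup law,
\[
D_{t|T}^{p}\bigl(I_{t|T}^{q}f\bigr)=(-1)^{n}\frac{d^{n}}{dt^{n}}\,I_{t|T}^{\,n-p}I_{t|T}^{\,q}f=(-1)^{n}\frac{d^{n}}{dt^{n}}\,I_{t|T}^{\,n-(p-q)}f=D_{t|T}^{\,p-q}f,
\]
the last step being the representation read backwards with the same $n$ (note $n-p\geqslant0$ and, since $p\geqslant q>0$, also $n-(p-q)>n-p\geqslant0$). The only delicate points are the differentiation under the integral sign and the vanishing of the boundary contributions, and this is precisely where the hypotheses $f\in C(0,\infty)$ and ``$D_{t|T}^{p-q}f$ exists'' enter. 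When $p=n$ is itself a positive integer, take $n=p$: then $I_{t|T}^{\,n-p}$ is the identity and $D_{t|T}^{n}(I_{t|T}^{q}f)$ is obtained by iterating $\frac{d}{dt}I_{t|T}^{q}g=-I_{t|T}^{q-1}g$ a total of $n$ times, which produces the sign $(-1)^{n}$ and the stated formula.

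For part (2), write $D_{t|T}^{q}f=(-1)^{n}\frac{d^{n}}{dt^{n}}I_{t|T}^{\,n-q}f$ with $n-1\leqslant q<n$, substitute it into $I_{t|T}^{m-p}$, and integrate by parts $n$ times against the kernel $(\tau-t)^{m-p-1}/\Gamma(m-p)$. Each integration by parts peels off one boundary term at the endpoint $\tau=T$, proportional to $(T-t)^{m-p-k}$ times the $(n-k)$-th derivative of $I_{t|T}^{\,n-q}f$ at $T$; using $D_{t|T}^{\,q-k}f=(-1)^{n-k}\frac{d^{n-k}}{dt^{n-k}}I_{t|T}^{\,n-q}f$ (the representation again, with the same $n-q$ because $n-1\leqslant q<n$), these boundary terms are rewritten in terms of $D_{t|T}^{\,q-k}f(T)$, while the Gamma factors $\Gamma(m-k-p+1)$ and $\Gamma(q-k)$ and the sign $(-1)^{n-k}$ come out of the power-function derivatives and the alternating signs of the repeated integration by parts. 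The inner endpoint $\tau=t$ contributes nothing, once the admissible range of the exponents is respected. What remains after all $n$ boundary terms have been extracted is $I_{t|T}^{\,m-p-n}$ applied to $I_{t|T}^{\,n-q}f$, which collapses to $I_{t|T}^{\,m-p-q}f=D_{t|T}^{\,p+q-m}f$ by the semigroup law (equivalently, by part (1)); this is the interior term of the asserted identity.

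The routine ingredients — the semigroup law, the power-function computations, the independence of $k$ in the representation — are quick. The genuine work, and the step I expect to be the main obstacle, is the bookkeeping in part (2): simultaneously matching exponents, Gamma arguments and signs in the telescoping sum of boundary terms, and, because the kernel $(\tau-t)^{m-p-1}$ and the fractional integral $I_{t|T}^{\,n-q}f$ are singular at their respective endpoints, justifying each integration by parts and correctly interpreting the ``final values'' $D_{t|T}^{\,q-k}f(T)$ as one-sided limits as $\tau\to T^{-}$.
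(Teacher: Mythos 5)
For part (1) your argument coincides with the paper's: both rest on the representation $D^{\mu}_{t|T}=(-1)^{k}\frac{d^{k}}{dt^{k}}I^{\,k-\mu}_{t|T}$ together with the semigroup law for $I^{a}_{t|T}$, the only cosmetic difference being that the paper first takes $m=\lceil p\rceil$ derivatives and then peels off $m-n$ of them against the integral, whereas you work with a single integer throughout. For part (2) you take a genuinely different route. The paper does not integrate by parts at all: it applies identity (\ref{nb}) backwards with exponent $q+p-m$ (possibly negative, so that $D^{q+p-m}_{t|T}$ is really an integral) to write $I^{m-p}_{t|T}D^{q}_{t|T}f=D^{q+p-m}_{t|T}\bigl(I^{q}_{t|T}D^{q}_{t|T}f\bigr)$, then imports from the cited reference the composition formula $I^{q}_{t|T}D^{q}_{t|T}f=f-\sum_{k=1}^{n}(-1)^{n-k}(T-t)^{q-k}D^{q-k}_{t|T}f(T)/\Gamma(q-k+1)$ and differentiates it term by term via $D^{\mu}_{t|T}(T-t)^{\beta}=\tfrac{\Gamma(\beta+1)}{\Gamma(\beta-\mu+1)}(T-t)^{\beta-\mu}$. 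Your $n$-fold integration by parts is in effect a self-contained re-derivation of that imported identity fused with the subsequent differentiation; it buys independence from the external reference at the price of the endpoint bookkeeping you already flag --- note that in the paper's own application of the lemma one has $0<m-p\leqslant 1$, so the kernel exponents $m-p-k$ are nonpositive for every $k\geqslant 1$ and the lower boundary terms do not simply vanish: they must be handled by keeping the outer derivatives outside the integral, which the paper's route does automatically. One incidental benefit of your route: carried out, it yields the coefficient $(T-t)^{m-p-k}D^{q-k}_{t|T}f(T)/\Gamma(m-k-p+1)$, which is also what the paper's own term-by-term differentiation produces; the extra factor $\Gamma(q-k)$ in the displayed formula (\ref{bo}) appears to be a typo that your derivation would have caught.
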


\begin{proof}
\begin{enumerate}
\item Let $m-1\leqslant p<m$ and $n-1\leqslant p-q<n$, we have
\begin{align*}
D^p_{t\vert T}(I^q_{t\vert T}f)(t)
&=(-1)^m D^nD^{m-n}(I^{m-p+q}_{t\vert T}f)(t)\\
&=(-1)^{n}D^n(I^{n-p+q}_{t\vert T}f)(t)\\
&= D^{p-q}_{t\vert T}f(t).
\end{align*}
\item Using $(\ref{nb})$, we can write 
\begin{align*}
I^{m-p}_{t\vert T}D^q_{t\vert T} f(t)&=D^{q+p-m}_{t\vert T}\lbrace I^q_{t\vert T}D^q_{t\vert T} f(t)\rbrace\\
&=D^{q+p-m}_{t\vert T}\bigg\lbrace f(t)-\sum_{k=1}^n(-1)^{n-k}\frac{(T-t)^{q-k}D^{q-k}_{t\vert T}f(T)}{\Gamma(q-k+1)}\bigg\rbrace\\
&=D^{q+p-m}_{t\vert T} f(t)-\sum_{k=1}^n (-1)^{n-k}\frac{(T-t)^{m-p-k}D^{q-k}_{t\vert T}f(T)}{\Gamma(q-k)\Gamma(m-k-p+1)},
\end{align*}
due to the  following equality \thinspace\cite{yong2016basic}
\begin{align*}
I^q_{t\vert T} D^q_{t\vert T}f(t)= f(t)-\sum_{k=1}^n(-1)^{n-k}\frac{(T-t)^{q-k}D^{q-k}_{t\vert T}f(T)}{\Gamma(q-k+1)}.
\end{align*}
\end{enumerate}
\end{proof}

\begin{lem}
\label{k} Let $p$ and $q$ be real numbers, if $m-1\leqslant
p<m $ and $n-1\leqslant q<n$, then 
\begin{align}  \label{o}
D^p_{t\vert T} D^q_{t\vert T}f(t)=D^{p+q}_{t\vert T}f(t)-\sum_{k=1}^n(-1)^{n-k}\frac{%
(T-t)^{-p-k}D^{q-k}_{t\vert T}f(T)}{\Gamma(q-k)\Gamma(1-k-p)(m-k-p+1)}.
\end{align}
\end{lem}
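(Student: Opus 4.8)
The plan is to derive the composition rule for right Riemann--Liouville derivatives $D^p_{t|T}D^q_{t|T}$ by reducing it to the already-established identity \eqref{bo} of the previous lemma. The key observation is that when $m-1\leqslant p<m$, one can split the operator $D^p_{t|T}$ as $D^p_{t|T}=(-1)^m D^m I^{m-p}_{t|T}$ (or, more to the point here, recognise that applying $D^p_{t|T}$ to $D^q_{t|T}f$ should be compared against $I^{m-p}_{t|T}D^q_{t|T}f$ followed by $m$ ordinary derivatives). So first I would write $D^p_{t|T}D^q_{t|T}f(t) = (-1)^m \dfrac{d^m}{dt^m}\bigl(I^{m-p}_{t|T}D^q_{t|T}f(t)\bigr)$, then substitute the right-hand side of \eqref{bo} for $I^{m-p}_{t|T}D^q_{t|T}f(t)$.

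Second, I would differentiate term by term. The first piece $D^{p+q-m}_{t|T}f(t)$, when hit with $(-1)^m \tfrac{d^m}{dt^m}$, becomes $D^{p+q}_{t|T}f(t)$, using again the relation between $D^n$ and $D^{\text{(fractional)}}_{t|T}$ for the right-sided operators (the $(-1)^m$ factors combine correctly because each $\tfrac{d}{dt}$ on a $t|T$-object carries a sign). For the finite sum, each summand is a constant (in the sense of being built from $D^{q-k}_{t|T}f(T)$, a number) times a power $(T-t)^{m-p-k}$; applying $\tfrac{d^m}{dt^m}$ to $(T-t)^{m-p-k}$ produces $(T-t)^{-p-k}$ up to a ratio of Gamma functions, namely a factor $(-1)^m\,\Gamma(m-p-k+1)/\Gamma(-p-k+1)$, and this is exactly what is needed to convert the $\Gamma(m-k-p+1)$ in the denominator of \eqref{bo} into the $\Gamma(1-k-p)(m-k-p+1)$ appearing in \eqref{o}, after absorbing the $(-1)^m$ against the $(-1)^{n-k}$ sign.

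The main obstacle I anticipate is purely bookkeeping: getting all the sign factors $(-1)^m$, $(-1)^{n-k}$ consistent, and correctly handling the Gamma-function identity $\Gamma(m-p-k+1) = (m-p-k+1)\,\Gamma(m-p-k) = \cdots$ so that after $m$ differentiations the denominator reads precisely $\Gamma(q-k)\Gamma(1-k-p)(m-k-p+1)$ rather than some equivalent-looking but differently-grouped expression. I would also need to check that the value $n$ in the upper limit of the sum — which records $n-1\leqslant q<n$ — is unaffected by differentiating in $t$, since the sum terms depend on $t$ only through the explicit powers $(T-t)^{\bullet}$ and not through $q$; this is immediate but worth stating. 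A secondary subtlety is the regularity hypothesis: one should assume enough smoothness (e.g. $f$ such that all the indicated right-sided derivatives exist and $I^{m-p}_{t|T}D^q_{t|T}f$ is $m$ times differentiable) so that the interchange of $\tfrac{d^m}{dt^m}$ with the finite sum and the identity \eqref{bo} are legitimate; this mirrors the standing assumptions already used in the proof of the preceding lemma. Once the constants are matched, \eqref{o} follows directly.
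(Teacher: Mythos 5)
Your proposal follows essentially the same route as the paper's own proof: both decompose $D^p_{t|T}D^q_{t|T}f$ as $(-1)^m D^m\bigl(I^{m-p}_{t|T}D^q_{t|T}f\bigr)$, substitute the identity \eqref{bo} from the preceding lemma, and then differentiate the resulting expression term by term, tracking the signs and the Gamma-function ratios produced by $\tfrac{d^m}{dt^m}(T-t)^{m-p-k}$. The only difference is that you spell out the constant-matching that the paper compresses into unlabeled coefficients $C_k$, which is a reasonable elaboration rather than a new idea.
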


\begin{proof}
 By the semigroup property of fractional integrals and (\ref{bo}), we can write 
\begin{align*}
D^p_{t\vert T} D^q_{t\vert T}f(t)&=D^m\left[(-1)^m I^{m-p}_{t\vert T}D^q_{t\vert T}f(t)\right]\\
&=D^{m}(-1)^m\left[D^{p+q-m}_{t\vert T}f(t)-\sum_{k=1}^n C_k{(T-t)^{m-p-k}D^{q-k}_{t\vert T}f(T)}\right]\\
&=D^{p+q}_{t\vert T}f(t)-\sum_{k=1}^n (-1)^{n-k}\frac{%
(T-t)^{-p-k}D^{q-k}_{t\vert T}f(T)}{\Gamma(q-k)\Gamma(1-k-p)(m-k-p+1)}.
\end{align*}
\end{proof}
For $T>0$ and $\eta\gg 1$, if we set 
\begin{equation}
\varphi _{1}(t)=%
\begin{cases}
\bigg(1-\frac{t}{T}\bigg)^{\eta }, & \hspace*{1cm}0<t\leqslant T, \\ 
0, & \hspace*{1cm}t> T,%
\end{cases}
\label{ob}
\end{equation}%
then 
\begin{align*}
D_{t|T}^{\alpha }\varphi _{1}(t)&=\frac{(\eta+1 )
}{\Gamma (\eta+1-\alpha )}T^{-\alpha }\bigg(1-\frac{t}{T}\bigg)^{\eta -\alpha }.
\end{align*}%
\begin{lem}[\cite{furati2008necessary}]\label{jii}
\label{ui} Let\thinspace $\varphi _{1}$\thinspace as in\thinspace $(\ref%
{ob})$, for \thinspace $\eta >\frac{p}{p-1}\theta -1$,
\begin{equation*}
\int_{0}^{T}D_{t|T}^{\theta }\varphi _{1}=\frac{C_{1}}{\eta -\theta }%
T^{1-\theta },
\end{equation*}%
and 
\begin{equation*}
\int_{0}^{T}\varphi _{1}^{-p^{\prime }/p}|D_{t|T}^{\theta }\varphi
_{1}|^{p^{\prime }}=CT^{1-p^{\prime }\theta },
\end{equation*}%
where 
\begin{equation*}
C=\frac{\eta ^{p^{\prime }}}{\eta +1-p^{\prime }\theta }\left[ \frac{\Gamma
(\eta -\theta ))}{\Gamma (\eta +1-2\theta )}\right] ^{p^{\prime }}.
\end{equation*}%
\end{lem}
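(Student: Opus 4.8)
The plan is to reduce both integrands to a constant multiple of a single power $(1-t/T)^{s}$ and then integrate that power over $(0,T)$, so the whole argument is an elementary computation resting on the power-rule formula for $D_{t|T}^{\theta}$ recalled immediately before the statement. Applying that formula with $\alpha$ replaced by $\theta$ gives
\[
D_{t|T}^{\theta}\varphi_{1}(t)=c_{\eta,\theta}\,T^{-\theta}\Bigl(1-\frac{t}{T}\Bigr)^{\eta-\theta},\qquad 0<t\le T,\qquad c_{\eta,\theta}:=\frac{\Gamma(\eta+1)}{\Gamma(\eta+1-\theta)}>0,
\]
and in particular $D_{t|T}^{\theta}\varphi_{1}\ge 0$ on $(0,T)$, so the absolute value in the second integral may be dropped. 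The hypothesis $\eta>\frac{p}{p-1}\theta-1$ is exactly $\eta>p'\theta-1$ with $p'=p/(p-1)>1$; since $\theta\in(0,1)$ this yields both $\eta+1-\theta>0$ and $\eta+1-p'\theta>0$, which are precisely the conditions under which the two integrals below converge.

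For the first identity I would substitute $r=1-t/T$ and integrate:
\[
\int_{0}^{T}D_{t|T}^{\theta}\varphi_{1}(t)\,dt=c_{\eta,\theta}\,T^{1-\theta}\int_{0}^{1}r^{\eta-\theta}\,dr=\frac{c_{\eta,\theta}}{\eta+1-\theta}\,T^{1-\theta},
\]
which is the asserted $\frac{C_{1}}{\eta-\theta}T^{1-\theta}$ with $C_{1}=\frac{c_{\eta,\theta}(\eta-\theta)}{\eta+1-\theta}=\frac{(\eta-\theta)\Gamma(\eta+1)}{\Gamma(\eta+2-\theta)}$.

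For the second identity the only step requiring care is the exponent bookkeeping. Inserting $\varphi_{1}(t)=(1-t/T)^{\eta}$ and the formula for $D_{t|T}^{\theta}\varphi_{1}$,
\[
\varphi_{1}^{-p'/p}(t)\,\bigl|D_{t|T}^{\theta}\varphi_{1}(t)\bigr|^{p'}=c_{\eta,\theta}^{p'}\,T^{-p'\theta}\Bigl(1-\frac{t}{T}\Bigr)^{-(p'/p)\eta+(\eta-\theta)p'}.
\]
Using the \emph{decisive identity} $p'/p=p'-1$ (immediate from $1/p+1/p'=1$), the exponent of $1-t/T$ collapses to $-(p'-1)\eta+(\eta-\theta)p'=\eta-p'\theta$, and the substitution $r=1-t/T$ gives $\int_{0}^{1}r^{\eta-p'\theta}\,dr=\frac{1}{\eta+1-p'\theta}$, finite precisely because $\eta>p'\theta-1$. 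Hence
\[
\int_{0}^{T}\varphi_{1}^{-p'/p}\,\bigl|D_{t|T}^{\theta}\varphi_{1}\bigr|^{p'}=\frac{c_{\eta,\theta}^{p'}}{\eta+1-p'\theta}\,T^{1-p'\theta}=C\,T^{1-p'\theta},
\]
the stated form of $C$ following by rewriting the Gamma ratio $c_{\eta,\theta}=\Gamma(\eta+1)/\Gamma(\eta+1-\theta)$ through the Beta-function identities used in \cite{furati2008necessary}.

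I do not expect a genuine obstacle: once the power rule for $D_{t|T}^{\theta}$ is granted, everything is explicit. The two points to handle with care are (i) justifying that power rule for $\varphi_{1}$, which vanishes identically for $t>T$ and to order $\eta\gg 1$ at $t=T$, so that the formula and the differentiation-under-the-integral it entails are legitimate, and (ii) the algebra $-(p'/p)\eta+(\eta-\theta)p'=\eta-p'\theta$, which is what ties the integrability of the second integrand — and hence the admissible range of $\eta$ — to the hypothesis $\eta>p'\theta-1$, the inequality that ultimately produces the critical exponent in the main theorems.
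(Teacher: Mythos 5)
Your computation is correct, and in fact the paper offers no proof of this lemma at all: it is imported verbatim from \cite{furati2008necessary}, so there is nothing in the text to compare against beyond the power rule $D_{t|T}^{\theta}\varphi_1(t)=\frac{\Gamma(\eta+1)}{\Gamma(\eta+1-\theta)}T^{-\theta}(1-t/T)^{\eta-\theta}$ stated just before it (where the printed numerator ``$\eta+1$'' is evidently a typo for $\Gamma(\eta+1)$). Your reduction of both integrands to a single power of $1-t/T$ via $p'/p=p'-1$, and the observation that $\eta>p'\theta-1$ is exactly the integrability condition $\eta-p'\theta>-1$, is the standard and correct argument. One remark: the constant you obtain, $C=\frac{1}{\eta+1-p'\theta}\bigl[\Gamma(\eta+1)/\Gamma(\eta+1-\theta)\bigr]^{p'}$, does not literally match the expression $\frac{\eta^{p'}}{\eta+1-p'\theta}\bigl[\Gamma(\eta-\theta)/\Gamma(\eta+1-2\theta)\bigr]^{p'}$ printed in the lemma (try $\eta=2$, $\theta=1/2$); the printed constant appears to be garbled (note also its unbalanced parenthesis), and since only the positivity and finiteness of $C$ and the power $T^{1-p'\theta}$ are used in the blow-up argument, your value is the one to trust. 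Your point (i) about justifying the power rule is the only step you leave implicit; it follows from the Beta-function computation $\int_t^T(\tau-t)^{-\theta}(T-\tau)^{\eta}\,d\tau=(T-t)^{1+\eta-\theta}B(1-\theta,\eta+1)$ and differentiating the resulting smooth power, which is legitimate for $\eta\gg1$.
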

\begin{lem}[\cite{bonforte2014quantitative}]
\label{kk} Let $\psi\in C^2(\mathbb{R}^N) $ be a real function decreasing in $\vert \xi\vert>1.$ Assume that
\begin{enumerate}
\item[i-] $\psi>0$ is compactly supported,
\item[ii-] $\psi\leqslant \vert \xi\vert ^{-\alpha}$ for $0<\alpha< N+2p\sigma $ and  $\vert \xi\vert$ large enough.
\end{enumerate}
Then there exist $C_1>0$ and $C_2>0$ such that
\begin{align}\label{ku}
\vert (-\Delta)^\sigma \psi\vert \leqslant C_1 \vert \xi\vert ^{-(N+2\sigma)} \hspace*{0.5cm}\text{and}\hspace*{0.5cm} \bigg \vert \frac{(-\Delta)^\sigma \psi}{\psi^{-1/p}}\bigg \vert^{\frac{p}{p-1}}< C_2.
\end{align}

\end{lem}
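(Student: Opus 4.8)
Here is how I would approach the proof of Lemma~\ref{kk}.

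The plan is to extract both bounds in (\ref{ku}) directly from the singular integral representation of $(-\Delta)^\sigma$, splitting the $y$-integration according to whether $|y|$ is small or comparable to $|\xi|$. First I would record the (easy) fact that $(-\Delta)^\sigma\psi$ is globally bounded. Writing
\[
(-\Delta)^\sigma\psi(\xi)=\frac{C(N,\sigma)}{2}\left(\int_{|y|\le 1}+\int_{|y|>1}\right)\frac{2\psi(\xi)-\psi(\xi+y)-\psi(\xi-y)}{|y|^{N+2\sigma}}\,dy,
\]
a second order Taylor expansion of $\psi$ bounds the numerator on $\{|y|\le 1\}$ by $\|D^2\psi\|_{L^\infty}|y|^2$, which is integrable against $|y|^{-N-2\sigma}$ since $\sigma<1$; on $\{|y|>1\}$ the numerator is at most $4\|\psi\|_{L^\infty}$ and $\int_{|y|>1}|y|^{-N-2\sigma}\,dy<\infty$. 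Hence $\|(-\Delta)^\sigma\psi\|_{L^\infty}\le C$, using only $\psi\in C^2\cap L^\infty$.

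For the first inequality in (\ref{ku}) the genuine content is the decay at infinity. Let $R_0>0$ be such that $\operatorname{supp}\psi\subset\{|\xi|\le R_0\}$ and take $|\xi|\ge 2R_0$; then $\psi(\xi)=0$, and the integrand is nonzero only when $\xi+y$ or $\xi-y$ lies in $\operatorname{supp}\psi$, which forces $|y|\ge|\xi|-R_0\ge|\xi|/2$. Therefore
\[
|(-\Delta)^\sigma\psi(\xi)|\le\frac{C(N,\sigma)}{2}\int_{|y|\ge|\xi|/2}\frac{\psi(\xi+y)+\psi(\xi-y)}{|y|^{N+2\sigma}}\,dy\le C(N,\sigma)\,2^{N+2\sigma}\,\|\psi\|_{L^1}\,|\xi|^{-(N+2\sigma)},
\]
where I used $\psi\ge 0$ and $\psi\in L^1(\mathbb R^N)$, both immediate from compact support (hypothesis (ii), with $\alpha<N+2p\sigma$, is the substitute available if one prefers a merely decaying $\psi$). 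For $0<|\xi|\le 2R_0$ the bound $|(-\Delta)^\sigma\psi(\xi)|\le C_1|\xi|^{-(N+2\sigma)}$ then follows from the global boundedness just proved, after enlarging $C_1$. This yields the first estimate.

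For the second estimate I would rewrite $(-\Delta)^\sigma\psi/\psi^{-1/p}=\psi^{1/p}\,(-\Delta)^\sigma\psi$. On any region where $\psi$ is bounded below away from $0$ this is at most $\|\psi\|_{L^\infty}^{1/p}\|(-\Delta)^\sigma\psi\|_{L^\infty}$; near the boundary of $\operatorname{supp}\psi$ one has $\psi\to 0$ while $(-\Delta)^\sigma\psi$ stays bounded, so the product stays controlled; and for $|\xi|$ large, combining the first inequality with the pointwise lower bound $\psi^{-1/p}(\xi)\ge|\xi|^{\alpha/p}$ coming from hypothesis (ii) gives $|\psi^{1/p}(-\Delta)^\sigma\psi(\xi)|\le C_1|\xi|^{-(N+2\sigma)-\alpha/p}$, which is bounded. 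Raising to the power $p/(p-1)$ produces the constant $C_2$; the restriction $\alpha<N+2p\sigma$ is what keeps the exponents that appear in the admissible range and makes $C_2$ uniform when $\psi$ is later replaced by the rescaled family $\psi(\cdot/R)$.

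The step I expect to require the most care is the decay estimate: a crude bound of the tail integral would only give $|(-\Delta)^\sigma\psi(\xi)|\le C|\xi|^{-2\sigma}$, and to upgrade this to the sharp order $|\xi|^{-(N+2\sigma)}$ one must exploit the vanishing of the ``local'' part of the integral once $\psi(\xi)=0$, i.e.\ track $\|\psi\|_{L^1}$ together with the distance from $\xi$ to $\operatorname{supp}\psi$ rather than just $\|\psi\|_{L^\infty}$. The remaining point — verifying that a single pair $(C_1,C_2)$ serves uniformly for the whole family of scaled test functions used in the blow-up argument — is routine but should be made explicit.
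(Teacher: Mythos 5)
The paper offers no proof of Lemma \ref{kk} at all: it is simply quoted from \cite{bonforte2014quantitative}, so there is no in-paper argument to compare yours against. Your treatment of the first bound is correct and is the standard one: global boundedness of $(-\Delta)^\sigma\psi$ from a second-order Taylor expansion near $y=0$ plus the integrable tail of the kernel, and then, for $|\xi|$ large, the observation that the numerator vanishes unless $|y|\geqslant |\xi|-R_0$, which upgrades the naive $|\xi|^{-2\sigma}$ decay to $\|\psi\|_{L^1}\,|\xi|^{-(N+2\sigma)}$. That part can stand as written.

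The gap is in the second bound. You read the quotient $(-\Delta)^\sigma\psi/\psi^{-1/p}$ literally as the product $\psi^{1/p}(-\Delta)^\sigma\psi$, which is indeed trivially bounded, but that is not the quantity the lemma is actually used for. In the blow-up proof the paper invokes Lemma \ref{kk} to control $\varphi_2^{-p'/p}\,|(-\Delta)^\sigma\varphi_2|^{p'}=\bigl|(-\Delta)^\sigma\psi/\psi^{1/p}\bigr|^{p'}$ (see the display immediately after the Young inequality step), i.e.\ $\psi$ sits in the denominator to a \emph{positive} power; the exponent $-1/p$ in the lemma statement is a typo. For that quantity your argument breaks down, and in fact the statement is false for a compactly supported $\psi\geqslant 0$: at any $\xi\notin\operatorname{supp}\psi$ one has $\psi(\xi)=0$ while $(-\Delta)^\sigma\psi(\xi)=-\tfrac{C(N,\sigma)}{2}\int\bigl(\psi(\xi+y)+\psi(\xi-y)\bigr)|y|^{-N-2\sigma}\,dy<0$, so the quotient is infinite there. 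The hypotheses in the cited reference are correspondingly different: $\psi>0$ everywhere (not compactly supported) with a \emph{two-sided} power bound $c|\xi|^{-\alpha}\leqslant\psi(\xi)\leqslant C|\xi|^{-\alpha}$ at infinity, e.g.\ $\psi(\xi)=(1+|\xi|^2)^{-\alpha/2}$; then your first estimate gives $|(-\Delta)^\sigma\psi|\,\psi^{-1/p}\leqslant C|\xi|^{\alpha/p-(N+2\sigma)}$, and the restriction on $\alpha$ is precisely what keeps this bounded. That pointwise lower bound on $\psi$ is the missing ingredient, and it is also where the condition $\alpha<N+2p\sigma$ genuinely enters; under your reading of the quotient it plays no role at all, which is a sign that the reading cannot be the intended one.
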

\section{Blow-up of solutions}
\hspace*{0.3cm} In this section, we first give the definition
of  weak solution of $(\ref{a})-(\ref{aa})$.  After  we prove the blow-up of solutions.
\begin{dfn}
\label{ab}Let $0<\alpha _{i}<1$, $p>1.$ For\thinspace\ $%
u_{0}, u_{1}\in L_{loc}^{p}(\mathbb{R}^{N})$, the function \thinspace $u\in
L_{loc}^{p}(Q_{T})$\thinspace\ is a weak solution of problem\thinspace $(\ref{a})-(\ref{aa})$ if 
\begin{align}
& \int_{Q_{T}}uD_{t|T}^{1+\alpha _{1}}\phi +\int_{Q_{T}}u(-\Delta)^\sigma \phi
+\int_{Q_{T}}u(-\Delta )^{\delta}D_{t|T}^{\alpha _{2}}\phi \notag  \label{ak} 
\nonumber\\&=\int_{Q_{T}}I_{0|t}^{1-\gamma }|u|^{p}\phi  
 +\int_{\mathbb{R^{N}}}u_{0}D_{t|T}^{\alpha _{1}}\phi
(0)+\int_{Q_{T}}u_{1}D_{t|T}^{\alpha _{1}}\phi +\int_{Q_{T}}u_{0}(-\Delta
)^{\delta}D_{t|T}^{\alpha _{2}}\phi,
\end{align}%
for  $Q_{T}:=[0,T]\times \mathbb{R^{N}}$, $\phi >0$, $\phi \in C_{c}^{\infty }([0,T]\times
\mathbb{R^{N}})$  with $\text{supp}_x\phi\subset\mathbb{R}^N$ and $\phi (T,.)=0.$
\end{dfn}

\begin{theo}
 Assume that $u_0=0$, $u_1\in L^1(\mathbb{R}^N)$ and $u_1\geqslant0$.
 If
\begin{equation*}
1<p\leqslant p*:= \frac{2(2+\alpha_1-\gamma) }{(\frac{\alpha_1+1}{\sigma} N+2\gamma-2\alpha_1-2)_+ }+1,
\end{equation*}%
then any solution to \thinspace(\ref{a})-(\ref{aa})\thinspace blows up in a finite
time.
\end{theo}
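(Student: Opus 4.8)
The plan is to argue by contradiction via the rescaled test-function method, using the sign hypothesis $u_1\ge 0$ only at the very end to rule out the zero solution. Assume $(\ref{a})$--$(\ref{aa})$ admits a global weak solution $u$ in the sense of Definition \ref{ab}. For parameters $T,R>0$ I would test the weak formulation against
\[
\phi(t,x)=\varphi_1^{\ell}(t)\,\psi(R^{-1}x),
\]
where $\varphi_1$ is the function $(\ref{ob})$ with $\eta\gg1$ (so that every fractional time-derivative below exists and the integrals of Lemma \ref{jii} converge), $\psi$ is the spatial cut-off of Lemma \ref{kk} raised to a sufficiently large power, and $\ell\ge p'=p/(p-1)$. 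Since $u_0=0$, the two initial terms carrying $u_0$ vanish; moving $I_{0|t}^{1-\gamma}$ off $|u|^p$ by Fubini's theorem converts the nonlinearity into $\int_{Q_T}|u|^p\,I_{t|T}^{1-\gamma}\phi$. Because $D_{t|T}^{\alpha_1}\varphi_1$ equals a positive constant times $T^{-\alpha_1}(1-t/T)^{\eta-\alpha_1}\ge 0$ on $(0,T)$ and $\psi\ge 0$, the remaining initial term $\int_{Q_T}u_1 D_{t|T}^{\alpha_1}\phi$ is nonnegative, whence
\[
\mathcal{I}:=\int_{Q_T}|u|^p\,I_{t|T}^{1-\gamma}\phi\ \le\ \int_{Q_T}|u|\Bigl(\,|D_{t|T}^{1+\alpha_1}\phi|+|(-\Delta)^{\sigma}\phi|+|(-\Delta)^{\delta}D_{t|T}^{\alpha_2}\phi|\,\Bigr),
\]
and the same right-hand side also bounds $\int_{Q_T}u_1 D_{t|T}^{\alpha_1}\phi$ from above.

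I would then estimate the three terms on the right by Young's inequality with exponents $p,p'$, pairing $|u|$ with $(I_{t|T}^{1-\gamma}\phi)^{1/p}$ so that $3\varepsilon\mathcal{I}$ is absorbed into the left side. Each resulting error integral factors into a time part and a space part. The time parts are evaluated by computing $D_{t|T}^{1+\alpha_1}\varphi_1$, $D_{t|T}^{\alpha_2}\varphi_1$ and $I_{t|T}^{1-\gamma}\varphi_1$ explicitly (all are constant multiples of $T^{-\ast}(1-t/T)^{\eta-\ast}$) and applying the elementary computations underlying Lemma \ref{jii}, now with the weight $I_{t|T}^{1-\gamma}\varphi_1^{\ell}$, which is again a power of $(1-t/T)$ times $T^{1-\gamma}$. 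The space parts are evaluated via the dilation identities $(-\Delta)^{\sigma}[\psi(R^{-1}\cdot)]=R^{-2\sigma}[(-\Delta)^{\sigma}\psi](R^{-1}\cdot)$ (and similarly for $(-\Delta)^{\delta}$) together with Lemma \ref{kk}, producing the volume factors $R^{N}$, $R^{N-2\sigma p'}$, $R^{N-2\delta p'}$. Altogether,
\[
\mathcal{I}\ \le\ C\bigl(T^{a_1}R^{N}+T^{a_2}R^{N-2\sigma p'}+T^{a_3}R^{N-2\delta p'}\bigr),
\]
with $a_1=1-(1+\alpha_1)p'-\tfrac{1-\gamma}{p-1}$, $a_2=1-\tfrac{1-\gamma}{p-1}$, $a_3=1-\alpha_2 p'-\tfrac{1-\gamma}{p-1}$.

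Next I would couple the scales by $R=T^{\kappa}$ with $\kappa=\tfrac{1+\alpha_1}{2\sigma}$, which makes the first two (dominant) $T$-exponents coincide with the common value
\[
E\ =\ 1-\tfrac{1-\gamma}{p-1}+\tfrac{(1+\alpha_1)N}{2\sigma}-(1+\alpha_1)\tfrac{p}{p-1},
\]
while the structural-damping term acquires $T$-exponent $E+p'\bigl[(1+\alpha_1-\alpha_2)-\tfrac{\delta(1+\alpha_1)}{\sigma}\bigr]<E$, the bracket being negative because $\sigma<\delta$ and $\alpha_2>0$; thus $\mathcal{I}\le C\,T^{E}$ for large $T$ (this is exactly where the standing inequalities $0<\sigma<\delta$, $0<\gamma<\alpha_2<\alpha_1$ make the damping term subordinate, so that $\delta,\alpha_2$ do not enter the threshold). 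On the other hand, for any fixed compact $K\subset(0,\infty)\times\mathbb{R}^N$ one has $I_{t|T}^{1-\gamma}\varphi_1^{\ell}(t)\,\psi(R^{-1}x)\ge c_0\,T^{1-\gamma}$ on $K$ once $T$ (hence $R$) is large, so $\mathcal{I}\ge c_0\,T^{1-\gamma}\int_K|u|^p$, giving $\int_K|u|^p\le C\,T^{E-(1-\gamma)}$. An elementary computation shows that the hypothesis $1<p\le p^{*}$ is precisely the inequality $E\le 1-\gamma$ (the $(\cdot)_+$ in $p^{*}$ accounting for the low-dimensional regime, where $E<1-\gamma$ for all $p>1$). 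When $p<p^{*}$, i.e. $E<1-\gamma$, letting $T\to\infty$ forces $\int_K|u|^p=0$ for every such $K$, hence $u\equiv 0$; but then the weak formulation collapses to $\int_{Q_T}u_1 D_{t|T}^{\alpha_1}\phi=0$, contradicting $\int_{Q_T}u_1 D_{t|T}^{\alpha_1}\phi\ge c\,\|u_1\|_{L^1}T^{1-\alpha_1}>0$ (by Lemma \ref{jii}, $u_1\in L^1$, $u_1\ge 0$, $u_1\not\equiv 0$).

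The step I expect to be the main obstacle is the critical case $p=p^{*}$ ($E=1-\gamma$): the bound $\mathcal{I}\le C\,T^{1-\gamma}$ now only yields $\sup_K\int_K|u|^p<\infty$, hence $|u|^p\in L^1((0,\infty)\times\mathbb{R}^N)$ by monotone convergence, but no immediate contradiction. To close it one has to re-run the estimates using this global integrability — replacing Young's by Hölder's inequality so as to isolate a factor $\bigl(\int_{E_i}|u|^p\bigr)^{1/p}$, $E_i$ being the support of the relevant test-function derivative, and exploiting that $\int_{\{t\ge\tau\}}\int_{\mathbb{R}^N}|u|^p=o(1)$ as $\tau\to\infty$ — and this is delicate precisely because the fractional time-derivatives $D_{t|T}^{\theta}\varphi_1$ are not supported in a thin layer near $t=T$, so the usual parabolic truncation has to be replaced by a quantitative splitting of the time integrals. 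A routine but necessary technical point is to choose the power $\ell$ and the power to which $\psi$ is raised large enough that $\psi^{-p'/p}|(-\Delta)^{\sigma}\psi|^{p'}$ and $\psi^{-p'/p}|(-\Delta)^{\delta}\psi|^{p'}$ are integrable, which is exactly what the second bound in Lemma \ref{kk} provides.
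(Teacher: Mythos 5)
Your subcritical argument is correct and, after unwinding, is the same computation as the paper's: the paper builds the fractional integral into the test function by taking $\phi=D^{1-\gamma}_{t|T}(\varphi_1\varphi_2)$ so that the nonlinearity collapses to $\int_{Q_T}|u|^p\tilde\varphi$, whereas you keep $\phi=\varphi_1^{\ell}\psi(R^{-1}\cdot)$ and transpose $I_{0|t}^{1-\gamma}$ onto $\phi$ by the adjoint identity, then run Young's inequality against the weight $I_{t|T}^{1-\gamma}\phi\sim T^{1-\gamma}$. These are dual bookkeepings of the same estimate: your scaling $R=T^{(1+\alpha_1)/(2\sigma)}$ is exactly the paper's $T^{\theta/2}$ with $\theta=(\alpha_1+1)/\sigma$, your exponents $a_1,a_2,a_3$ reproduce the paper's $-p'(2+\alpha_1-\gamma)+\tfrac{\theta N}{2}+1$, etc., after shifting by the factor $T^{1-\gamma}$ you carry on the left, and your condition $E\le 1-\gamma$ is algebraically equivalent to $p\le p^{*}$ (including the $(\cdot)_+$ degeneration). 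The observation that the $(-\Delta)^{\delta}\mathbf{D}^{\alpha_2}_{0|t}$ term is subordinate because $\sigma<\delta$ and $\alpha_2>0$ is also the step the paper performs when it absorbs the middle term into the last one in $(\ref{kp})$. (Both you and the paper tacitly need $u_1\not\equiv 0$ to get the final contradiction; that hypothesis is missing from the theorem statement but is clearly intended.)

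The genuine gap is the critical case $p=p^{*}$, which you explicitly flag but do not close. Your sketch points toward a \emph{time} splitting ("$\int_{\{t\ge\tau\}}\int_{\mathbb{R}^N}|u|^p=o(1)$", quantitative splitting of the time integrals), and you correctly note that this is awkward because $D^{\theta}_{t|T}\varphi_1$ is not concentrated near $t=T$; but that is not the device that works here. What the paper does (and what the standard Mitidieri--Pohozaev critical-case argument requires) is a second \emph{spatial} scale: take $\varphi_2(x)=\psi\bigl(|x|/(B^{-1}T^{\theta/2})\bigr)$ with $1\ll B<T$ and $B$ independent of $T$, replace Young's inequality by H\"older's so each error term is bounded by $C\,B^{-\kappa_i}\bigl(\int_{\Sigma_B}|u|^p\tilde\varphi\bigr)^{1/p}$ with $\kappa_i>0$ (this is where the extra restriction $p>N/(N-2\sigma)$, i.e. $2\sigma-N/p'<0$, enters in the paper), use the uniform bound $\int|u|^p\tilde\varphi\le C$ already obtained at $p=p^{*}$ to let $T\to\infty$ first, and only then send $B\to\infty$ to kill the right-hand side while the left-hand side stays bounded below. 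Without carrying out this (or an equivalent) second-parameter argument, your proof establishes nonexistence only for $1<p<p^{*}$, not at the endpoint, so the theorem as stated is not proved.
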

\begin{proof}
We assume the contrary. Let
\begin{align*}
\phi(t,x)=D^{1-\gamma}_{t\vert T}\tilde{\varphi}(t,x)=D^{1-\gamma}_{t\vert T}\left(\varphi_1(t)\varphi_2(x)\right),
\end{align*}
where $\varphi_{1}(t) $\thinspace and\thinspace$\varphi_2(x)=\psi(T^{-\theta/2}x)$\thinspace are defined as in  $(\ref{ob})$ and $(\ref{ku})$ .\\
According to\thinspace$(\ref{ak})$ and Lemma\thinspace \ref{k}, we have
\begin{align}\label{p}
&\int_{Q_T}\vert u\vert^p\tilde{\varphi}+\int_{Q_T}u_1\varphi_2 D^{1+\alpha_1-\gamma}_{t\vert T}\varphi_1\nonumber\\
&=\int_{Q_T} u\varphi_2D^{2+\alpha_1-\gamma}_{t\vert T}\varphi_1+\int_{Q_T} u(-\Delta)^{\delta}\varphi_2D^{1+\alpha_2-\gamma}_{t\vert T}\varphi_1
+\int_{Q_T} u(-\Delta)^\sigma\varphi_2D^{1-\gamma}_{t\vert T}\varphi_1.
\end{align}
Therefore, by Lemma\thinspace\ref{jii}, we get 
\begin{align}\label{h}
&\int_{Q_T}\vert u\vert^p\tilde{\varphi}+C T^{-(\alpha_1-\gamma)}\int_{\mathbb{R}^N}u_1(x)\varphi_2(x)dx\nonumber\\
&\leqslant \int_{Q_T} \vert u\vert \left(\varphi_2 D^{2+\alpha_1-\gamma}_{t\vert T}\varphi_1+D^{1+\alpha_2-\gamma}_{t\vert T}\varphi_1\vert (-\Delta)^{\delta}\varphi_2\vert+D^{1-\gamma}_{t\vert T}\varphi_1\vert (-\Delta)^\sigma\varphi_2\vert\right)\nonumber.
\end{align}
 Using the\thinspace Young inequality with parameters \thinspace$p$\thinspace and\thinspace$ p'=\frac{p}{p-1}$, we obtain
 \begin{align}
 &\int_{Q_T}\vert u\vert^p\tilde{\varphi}+C T^{-(\alpha_1-\gamma)}\int_{\mathbb{R}^N}u_1(x)\varphi_2(x)dx\nonumber\\
&\leqslant\frac{1}{3p}\int_{Q_T}\vert u\vert^p \tilde{\varphi}+\frac{3^{p'-1}}{p'}\int_{\Sigma}\varphi_2^{-p'/p}\varphi_1^{-p'/p}\vert D^{2+\alpha_1-\gamma}_{t\vert T}\varphi_1\vert^{p'}\nonumber\\&+\frac{1}{3p}\int_{Q_T}\vert u\vert^p \tilde{\varphi}+\frac{3^{p'-1}}{p'}\int_{\Sigma}\varphi_2^{-p'/p}\varphi_1^{-p'/p}\vert (-\Delta)^{\delta}\varphi_2 \vert^{p'}\vert D^{1+\alpha_2-\gamma}_{t\vert T}\varphi_1\vert^{p'}\nonumber\\&+\frac{1}{3p}\int_{Q_T}\vert u\vert^p \tilde{\varphi}+\frac{3^{p'-1}}{p'}\int_{\Sigma}\varphi_2^{-p'/p}\varphi_1^{-p'/p}\vert (-\Delta)^\sigma\varphi_2\vert^{p'}\vert D^{1-\gamma}_{t\vert T}\varphi_1\vert^{p'},
 \end{align}
where\thinspace$\Sigma=[0,T]\times supp\; \varphi_2.$ Using Lemma\thinspace\ref{kk}, it holds
\begin{align*}
\bigg \vert \frac{(-\Delta)^\sigma \varphi_2(x)}{\varphi_2^{1/p}}\bigg \vert^{p'}=T^{-\theta\sigma p'}\bigg \vert \frac{(-\Delta)^\sigma \psi}{\psi^{1/p}}\bigg \vert^{\frac{p}{p-1}}<T^{-\theta\sigma p'} C_2.
\end{align*}
Passing to the scaled variables
 \begin{align*}
 \tau=\frac{t}{T}\hspace*{0.2cm},\hspace*{0.2cm} \xi= \frac{\vert x\vert}{T^{\theta/2}} ,\hspace*{0.2cm} \theta=\frac{\alpha_1+1}{\sigma}\hspace*{0.2cm}\text{and}\hspace*{0.2cm}\hspace*{0.2cm}T\gg 1.
 \end{align*}
Hence
 \begin{align}\label{kp}
&\bigg(1-\frac{1}{p}\bigg)\int_{Q_T}\vert u\vert^p\tilde{\varphi}+C T^{-(\alpha_1-\gamma)}\int_{\mathbb{R}^N}u_1(x)\varphi_2(x)dx\nonumber\\
&\leqslant C\left( T^{-p'(2+\alpha_1-\gamma)+\frac{\theta N}{2}+1}+T^{-p'(\theta\delta+1+\alpha_2-\gamma)+\frac{\theta N}{2}+1}+T^{-p'(\theta\sigma+1-\gamma)+\frac{\theta N}{2}+1}\right)\nonumber\\
&\leqslant C\left( T^{-p'(2+\alpha_1-\gamma)+\frac{\theta N}{2}+1}+ 2T^{-p'(\theta\sigma+1-\gamma)+\frac{\theta N}{2}+1}\right)\nonumber\\
&\leqslant C T^{-p'(2+\alpha_1-\gamma)+\frac{\theta N}{2}+1}.
\end{align}
Under the condition\thinspace$u_1(x)\geqslant 0$, we obtain
\begin{align}
\frac{1}{p'}\int_{Q_T}\vert u\vert^p\tilde{\varphi}\leqslant C T^{-p'(2+\alpha_1-\gamma)+\frac{\theta N}{2}+1}.
\end{align}
Since
\begin{align*}
p\leqslant p*,
\end{align*}
we have to distinguish two cases.

In case $p<p*:$ if a solution of $(\ref{a})-(\ref{aa})$ exists globally, then taking $T\rightarrow+\infty$, we get
\[\lim_{T\rightarrow\infty}\int_{Q_T}\vert u\vert^p\tilde{\varphi}<0.\]
 Contradiction the fact that $\int_{0}^\infty\int_{\mathbb{R}^N}\vert u\vert^p\tilde{\varphi}\geqslant 0$.

In case\,$p=p*:$ we repeat the same calculation as above by  taking\thinspace$\varphi_2(x)=\psi(\frac{\vert x\vert}{B^{-1}T^{\theta/2}})$,  where \thinspace$1\ll B<T$\thinspace and when \thinspace$T$ goes to infinity we don't have $B$ goes to infinity at the same time, employing the H$\ddot{o}$lder's inequality instead of Young's, we obtain
\begin{align}\label{hh}
\int_{\Sigma_B}\vert u\vert\varphi_2 D^{2+\alpha_1-\gamma}_{t\vert T}\varphi_1\leqslant  B^{-\frac{N}{ p'}} \left(\int_{\Sigma_B}\vert u\vert^p\tilde{\varphi} \right)^{1/p}.
\end{align}
Using Lemma\thinspace\ref{kk} and  the H$\ddot{o}$lder inequality, we have
\begin{align}\label{rt}
\int_{\Sigma_B}\vert u(-\Delta)^{\delta}\varphi_2\vert D^{1+\alpha_2-\gamma}_{t\vert T}\varphi_1\leqslant C T^{-(\theta(\delta-\sigma)+\alpha_2)}B^{2\delta-\frac{N}{ p'}}\left(\int_{\Sigma_B} \vert u\vert^p \tilde{\varphi}\right)^{1/p},
\end{align}
and
\begin{align}\label{rp}
\int_{\Sigma_B}\vert u(-\Delta)^\sigma\varphi_2\vert D^{1-\gamma}_{t\vert T}\varphi_1\leqslant B^{2\sigma-\frac{N}{ p'}} \left(\int_{\Sigma_B} \vert u\vert^p \tilde{\varphi}\right)^{1/p},
\end{align}
where $\Sigma_B= [0,T]\times\text{supp}_x\varphi_2$. Combining\,$(\ref{hh})$, $(\ref{rt})$ with $(\ref{rp})$, we get
\begin{align*}
\int_{Q_T}\vert u\vert^p\tilde{\varphi}+C T^{-(\alpha_1-\gamma)}\int_{\mathbb{R}^N}u_1(x)\varphi_2(x)dx&\leqslant C  B^{-\frac{N}{\beta p'}}\left(\int_{\Sigma_B}\vert u\vert^p\tilde{\varphi} \right)^{1/p}+ C T^{-(\theta(\delta-\sigma)+\alpha_2)} B^{2\delta-\frac{N}{ p'}}\left(\int_{\Sigma_B} \vert u\vert^p \tilde{\varphi}\right)^{1/p}\\&+C B^{2\sigma-\frac{N}{ p'}}\left(\int_{\Sigma_B} \vert u\vert^p \tilde{\varphi}\right)^{1/p}.
\end{align*}
Thus, passing the limit as\thinspace$T\longrightarrow +\infty$\thinspace and then where\thinspace$B\longrightarrow +\infty$, with $p>\frac{N}{N-2\sigma}$ we have
\begin{align*}
\lim_{T\rightarrow\infty}\int_{Q_T}\vert u\vert^p\tilde{\varphi}< 0.
\end{align*}
 This leads to a contradiction.

\end{proof}
\section{Case of a $2\times 2$  system}
\hspace*{0.3cm} This part is concerned with the study of the following system 
\begin{align}\label{kio}
\begin{cases}
{\mathbf{D}}_{0|t}^{1+\alpha _{1}}u+(-\Delta)^{\sigma_1} u+(-\Delta )^{\delta _{1}}{\mathbf{D}}%
_{0|t}^{\alpha _{2}}u=I_{0|t}^{1-\gamma _{1}}|v|^{p},\hspace*{0.5cm}%
(t,x)\in (0,\infty )\times \mathbb{R}^{N}, \\
\\
{\mathbf{D}}_{0|t}^{1+\beta _{1}}v+(-\Delta)^{\sigma_2} v+(-\Delta )^{\delta _{2}}{\mathbf{D}}%
_{0|t}^{\beta _{2}}v=I_{0|t}^{1-\gamma _{2}}|u|^{q},\hspace*{0.5cm}%
(t,x)\in (0,\infty )\times \mathbb{R}^{N},  
\end{cases}
\end{align}
supplemented with the initial conditions
\begin{align}\label{aui}
\begin{cases}
u(0,x)=u_{0}(x), \;u_{t}(0,x)=u_{1}(x),\hspace*{0.5cm}x\in \mathbb{R}^{N},\\
v(0,x)=v_{0}(x), \;v_{t}(0,x)=v_{1}(x),\hspace*{0.5cm}x\in \mathbb{R}^{N},
\end{cases}
\end{align}
where\thinspace $p>1$, $q>1$, $0<\gamma_1<\alpha _{2}<\alpha _{1}<1$, $0<\gamma_2<\beta _{2}<\beta _{1}<1$ and  $0<\sigma_i <\delta_i <1$.

\begin{theo}
Assume that $u_0,\;v_0=0$, whereas $u_1,\;v_1\in L^1(\mathbb{R}^N)$ and $u_1,\;v_1\geqslant 0$. If  
\begin{equation*}
N< \max \left\{ \frac{(2+\beta_1-\gamma_2)+\frac{1}{p}+q(1+\alpha_1-\gamma_1)}{\frac{(\beta_1+1)}{2p'\sigma_2}+\frac{(\alpha_1+1) q}{2q'\sigma_1}};\frac{(2+\alpha_1-\gamma_1)+\frac{1}{q}+p(1+\beta_1-\gamma_2)}{\frac{(\alpha_1+1)}{2q'\sigma_1}+\frac{(\beta_1+1) p}{2p'\sigma_2}}
\right\},
 \end{equation*}%
then any  solution $(u,v)$ to\thinspace $(\ref{kio})-(\ref{aui})$ blows-up in a finite time.
\end{theo}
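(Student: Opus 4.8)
The plan is to adapt the single-equation argument of Theorem 3.2 to the coupled system by introducing a test function for each equation and estimating the two nonlinear terms $\int |u|^q$ and $\int |v|^p$ simultaneously. First I would set up the weak formulation of $(\ref{kio})$--$(\ref{aui})$ analogously to Definition \ref{ab}, using one test function $\phi = D^{1-\gamma_1}_{t|T}(\varphi_1\varphi_2)$ for the $u$-equation and $\psi = D^{1-\gamma_2}_{t|T}(\varphi_1\tilde\varphi_2)$ for the $v$-equation, with $\varphi_1$ as in $(\ref{ob})$ and the spatial cutoffs scaled by $T^{\theta_1/2}$ and $T^{\theta_2/2}$ respectively, where the exponents $\theta_1=\frac{\alpha_1+1}{\sigma_1}$ and $\theta_2=\frac{\beta_1+1}{\sigma_2}$ are chosen exactly so that the "worst" damping term and the leading $D^{2+\cdot}_{t|T}\varphi_1$ term balance in each equation, as in $(\ref{kp})$. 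Applying Lemma \ref{k}, Lemma \ref{jii} and Lemma \ref{kk} to each equation separately, and using that $u_0=v_0=0$, $u_1,v_1\geqslant 0$, I obtain
\begin{align*}
\int_{Q_T}|v|^p\tilde\varphi &\leqslant C\, T^{-p'(2+\alpha_1-\gamma_1)+\frac{\theta_1 N}{2}+1},\\
\int_{Q_T}|u|^q\tilde{\tilde\varphi} &\leqslant C\, T^{-q'(2+\beta_1-\gamma_2)+\frac{\theta_2 N}{2}+1},
\end{align*}
after discarding the nonnegative initial-data contributions.

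The key coupling step is that, in the weak formulation of the $u$-equation, the source term is $\int_{Q_T} I_{0|t}^{1-\gamma_1}|v|^p\,\phi$, which after the integration-by-parts identity and Young's inequality must instead be estimated \emph{from below} by $\int_{Q_T}|v|^p\tilde\varphi$ minus error terms; so I would not immediately apply Young to $|v|^p$ but rather keep $\int|v|^p\tilde\varphi$ on the left and bound the linear-in-$u$ terms by Young's inequality with exponents $q$ and $q'$ (matching the $|u|^q$ nonlinearity), and symmetrically for the $v$-equation. This produces the crossed system of inequalities
\begin{align*}
\int_{Q_T}|v|^p\tilde\varphi &\leqslant \varepsilon \int_{Q_T}|u|^q\tilde{\tilde\varphi} + C\, T^{a_1},\qquad
\int_{Q_T}|u|^q\tilde{\tilde\varphi} \leqslant \varepsilon \int_{Q_T}|v|^p\tilde\varphi + C\, T^{a_2},
\end{align*}
with $a_1 = -q'(2+\alpha_1-\gamma_1)+\frac{\theta_2 N}{2}+1$ coming from estimating the $u$-linear terms by the $v$-support scaling, and similarly $a_2$. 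Feeding one inequality into the other and absorbing the $\varepsilon$-terms gives $\int_{Q_T}|v|^p\tilde\varphi \leqslant C(T^{a_1}+T^{b_1})$ and $\int_{Q_T}|u|^q\tilde{\tilde\varphi}\leqslant C(T^{a_2}+T^{b_2})$ for appropriate mixed exponents $b_i$; the relevant exponent after combining is the one whose nonpositivity is governed by the two fractions appearing in the $\max$ in the statement — e.g. $N < \frac{(2+\beta_1-\gamma_2)+\frac1p + q(1+\alpha_1-\gamma_1)}{\frac{\beta_1+1}{2p'\sigma_2}+\frac{(\alpha_1+1)q}{2q'\sigma_1}}$ forces the combined exponent for the $v$-bound to be negative.

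Once the combined exponent is strictly negative (this is precisely the condition $N<\max\{\cdots\}$, where it suffices that \emph{one} of the two chains of estimates yields a negative exponent), I conclude as in Theorem 3.2: if a global solution existed, letting $T\to\infty$ forces $\int_0^\infty\int_{\mathbb R^N}|v|^p\tilde\varphi <0$ (or the analogous statement for $u$), contradicting nonnegativity of the integrand. The critical case (combined exponent $=0$) is handled, exactly as in the single-equation proof, by the refined choice $\varphi_2(x)=\psi(|x|/(B^{-1}T^{\theta_1/2}))$ with $1\ll B < T$, replacing Young by Hölder to pick up the extra negative powers of $B$ (the analogues of $(\ref{hh})$--$(\ref{rp})$), then sending $T\to\infty$ and afterwards $B\to\infty$. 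The main obstacle I anticipate is bookkeeping: correctly tracking which spatial scaling ($\theta_1$ vs.\ $\theta_2$) controls each of the six error terms (three per equation), verifying that the damping terms $(-\Delta)^{\delta_i}$ are genuinely subdominant — which needs $2\delta_i$ small relative to the scaling, i.e.\ the hypothesis $\sigma_i<\delta_i<1$ together with $T\gg1$ is used as in $(\ref{kp})$ to drop them — and checking that after the two substitutions the surviving exponent is exactly the reciprocal of the fraction in the $\max$. A secondary technical point is ensuring Lemma \ref{kk} applies to $\psi(T^{-\theta_i/2}\cdot)$ with $\alpha<N+2p\sigma_i$ for the relevant $p$ or $q$, which constrains $\eta$ and the decay rate of $\psi$ but causes no essential difficulty.
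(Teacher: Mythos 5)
Your overall architecture (two scaled test functions, one per equation, with $\theta_1=\frac{\alpha_1+1}{\sigma_1}$, $\theta_2=\frac{\beta_1+1}{\sigma_2}$, then a crossed pair of inequalities) matches the paper, but the step where you combine the two inequalities contains a genuine gap. You propose to estimate the linear-in-$u$ terms by Young's inequality, yielding an \emph{additive} system
$\int_{Q_T}|v|^p\tilde\varphi \leqslant \varepsilon \int_{Q_T}|u|^q\tilde{\tilde\varphi} + C T^{a_1}$ and $\int_{Q_T}|u|^q\tilde{\tilde\varphi} \leqslant \varepsilon \int_{Q_T}|v|^p\tilde\varphi + C T^{a_2}$. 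Substituting one into the other and absorbing only gives $\int_{Q_T}|v|^p\tilde\varphi \leqslant C\left(T^{a_1}+T^{a_2}\right)$, so you need \emph{both} individual exponents $a_1,a_2$ to be nonpositive; no ``mixed exponents $b_i$'' ever appear from an additive absorption. The condition in the theorem, however, is exactly the negativity of a \emph{weighted} combination: the paper uses H\"older (not Young) to get the multiplicative inequalities $\int_{Q_T}|v|^p\tilde\varphi\leqslant \left(\int_{Q_T}|u|^q\tilde\varphi\right)^{1/q}\mathcal{A}$ and $\int_{Q_T}|u|^q\tilde\varphi\leqslant \left(\int_{Q_T}|v|^p\tilde\varphi\right)^{1/p}\mathcal{B}$, whose composition gives $\left(\int_{Q_T}|v|^p\tilde\varphi\right)^{1-\frac{1}{pq}}\leqslant \mathcal{B}^{1/q}\mathcal{A}\sim T^{l_1}$ with
\begin{equation*}
l_1=\frac{1}{q}\Big(-(2+\beta_1-\gamma_2)+\tfrac{1}{p'}(\tfrac{\theta_2 N}{2}+1)\Big)-(2+\alpha_1-\gamma_1)+\tfrac{1}{q'}(\tfrac{\theta_1 N}{2}+1),
\end{equation*}
and one checks that $l_1<0$ is precisely $N$ less than the first fraction in the $\max$ (the geometric-mean structure is what produces the $\frac1p$, the factor $q$, and the weights $\frac{1}{2p'\sigma_2}$, $\frac{q}{2q'\sigma_1}$). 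Your scheme, as written, proves blow-up only under the strictly stronger requirement that each separate exponent be negative, and so does not establish the stated range.

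Two smaller points. First, your opening display ($\int_{Q_T}|v|^p\tilde\varphi\leqslant CT^{-p'(2+\alpha_1-\gamma_1)+\theta_1 N/2+1}$ ``applying the lemmas to each equation separately'') is not obtainable: in each equation of the system the nonlinearity and the linear terms involve \emph{different} unknowns, so the single-equation absorption of Theorem 3.2 simply does not apply there; you partly retract this in the next paragraph, but the retraction leads to the Young scheme criticized above rather than to the H\"older scheme that is actually needed. Second, your exponent $a_1=-q'(2+\alpha_1-\gamma_1)+\frac{\theta_2 N}{2}+1$ mixes the scalings: the error terms coming from the $u$-equation's test function carry the $\theta_1$-scaling, not $\theta_2$. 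Finally, the theorem's hypothesis is a strict inequality, so no critical-case $B$-argument is required here (the paper only uses that device for the scalar equation).
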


\begin{proof}
 The proof proceeds by contradiction. Let
 \begin{align*}
\phi(t,x)=D^{1-\gamma_i}_{t\vert T}\tilde{\varphi}(t,x)=D^{1-\gamma_i}_{t\vert T}\left(\varphi_1(t)\varphi_2(x) \right),\hspace*{0.3cm} i=1,2,
\end{align*}
where $\varphi_1$ is defined as in $(\ref{ob})$ however with condition $\eta>\bigg\lbrace \frac{p}{p-1}(2+\alpha_1-\gamma_1), \frac{p}{p-1}(2+\beta_1-\gamma_2)\bigg\rbrace $
and\, $\varphi_2(x)=\psi\left(\frac{\vert x\vert}{T^{\theta_i/2}} \right)$\,is defined above.\\

The weak solutions to system $(\ref{kio})-(\ref{aui})$\,reads as
\begin{align}\label{in}
&\int_{Q_T}\vert v\vert^p\tilde{\varphi}+\int_{Q_T}u_1\varphi_2D^{1+\alpha_1-\gamma_1}_{t\vert T}\varphi_1\nonumber\\&=\int_{Q_T}u\varphi_2D^{2+\alpha_1-\gamma_1}_{t\vert T}\varphi_1+\int_{Q_T}u(-\Delta)^{\delta_1}\varphi_2 D^{1+\alpha_2-\gamma_1}_{t\vert T}\varphi_1+\int_{Q_T}u(-\Delta)^{\sigma_1}\varphi_2D^{1-\gamma_1}_{t\vert T}\varphi_1,
\end{align}
and
\begin{align}
&\int_{Q_T}\vert u\vert^q\tilde{\varphi}+\int_{Q_T}v_1\varphi_2D^{1+\beta_1-\gamma_2}_{t\vert T}\varphi_1\nonumber\\&=\int_{Q_T}v\varphi_2D^{2+\beta_1-\gamma_2}_{t\vert T}\varphi_1+\int_{Q_T}v(-\Delta)^{\delta_2}\varphi_2 D^{1+\beta_2-\gamma_2}_{t\vert T}\varphi_1+\int_{Q_T}v(-\Delta)^{\sigma_2}\varphi_2D^{1-\gamma_2}_{t\vert T}\varphi_1.
\end{align} 
Using the H$\ddot{o}$lder inequality, we obtain
\begin{align}\label{inn}
\int_{Q_T}u\varphi_2D^{2+\alpha_1-\gamma_1}_{t\vert T}\varphi_1\leqslant \left(\int_{Q_T}\vert u\vert^q\tilde{\varphi}\right)^{1/q}\left(\int_{Q_T}\varphi_2\varphi_1^{-\frac{q'}{q}}\vert D^{2+\alpha_1-\gamma_1}_{t\vert T}\varphi_1\vert^{q'} \right)^{1/q'},
\end{align}
\begin{align}\label{innn}
&\int_{Q_T}u(-\Delta)^{\delta_1}\varphi_2 D^{1+\alpha_2-\gamma_1}_{t\vert T}\varphi_1\nonumber\\
&\leqslant\left(\int_{Q_T}\vert u\vert^q\tilde{\varphi}\right)^{1/q}\left(\int_{Q_T}\varphi_2^{-\frac{q'}{q}}\vert(-\Delta)^{\delta_1}\varphi_2\vert^{q'}\varphi_1^{-\frac{q'}{q}}\vert D^{1+\alpha_2-\gamma_1}_{t\vert T}\varphi_1\vert^{q'}\right)^{1/q'},
\end{align}
and
\begin{align}\label{innnn}
\int_{Q_T}u(-\Delta)^{\sigma_1}\varphi_2 D^{1-\gamma_1}_{t\vert T}\varphi_1
\leqslant\bigg(\int_{Q_T}\vert u\vert^q\tilde{\varphi}\bigg{)}^{1/q}\left(\int_{Q_T}\varphi_2^{-\frac{q'}{q}}\vert(-\Delta)^{\sigma_1}\varphi_2\vert^{q'}\varphi_1^{-\frac{q'}{q}}\vert D^{1-\gamma_1}_{t\vert T}\varphi_1\vert^{q'}\right)^{1/q'}.
\end{align}
Taking into account the above relation (\ref{in}), (\ref{inn}), (\ref{innn}) and\thinspace$(\ref{innnn})$, we find
\begin{align}\label{obb}
\int_{Q_T}\vert v\vert^p\tilde{\varphi}+C T^{-(\alpha_1-\gamma_1)}\int_{\mathbb{R}^N}u_1\varphi_2\leqslant \left(\int_{Q_T}\vert u\vert^q\tilde{\varphi}\right)^{1/q} \mathcal{A},
\end{align}
we have set
\begin{align*}
\mathcal{A}=\left(\int_{Q_T}\varphi_2\varphi_1^{-\frac{q'}{q}}\vert D^{2+\alpha_1-\gamma_1}_{t\vert T}\varphi_1\vert^{q'} \right)^{1/q'}&+\left(\int_{Q_T}\varphi_2^{-\frac{q'}{q}}\vert(-\Delta)^{\delta_1}\varphi_2\vert^{q'}\varphi_1^{-\frac{q'}{q}}\vert D^{1+\alpha_2-\gamma_1}_{t\vert T}\varphi_1\vert^{q'}\right)^{1/q'}\\&+\left(\int_{Q_T}\varphi_2^{-\frac{q'}{q}}\vert(-\Delta)^{\sigma_1}\varphi_2\vert^{q'}\varphi_1^{-\frac{q'}{q}}\vert D^{1-\gamma_1}_{t\vert T}\varphi_1\vert^{q'}\right)^{1/q'}.
\end{align*}
Similarly, we get 
\begin{align}\label{obk}
\int_{Q_T}\vert u\vert^q\tilde{\varphi}+C T^{-(\beta_1-\gamma_2)}\int_{\mathbb{R}^N}v_1\varphi_2\leqslant \left(\int_{Q_T}\vert v\vert^p\tilde{\varphi}\right)^{1/p} \mathcal{B},
\end{align}
with
\begin{align*}
\mathcal{B}=\left(\int_{Q_T}\varphi_2\varphi_1^{-\frac{p'}{p}}\vert D^{2+\beta_1-\gamma_2}_{t\vert T}\varphi_1\vert^{p'} \right)^{1/p'}&+\left(\int_{Q_T}\varphi_2^{-\frac{p'}{p}}\vert(-\Delta)^{\delta_2}\varphi_2\vert^{p'}\varphi_1^{-\frac{p'}{p}}\vert D^{1+\beta_2-\gamma_2}_{t\vert T}\varphi_1\vert^{p'}\right)^{1/p'}\\&+\left(\int_{Q_T}\varphi_2^{-\frac{p'}{p}}\vert(-\Delta)^{\sigma_2}\varphi_2\vert^{p'}\varphi_1^{-\frac{p'}{p}}\vert D^{1-\gamma_2}_{t\vert T}\varphi_1\vert^{p'}\right)^{1/p'}.
\end{align*}
Therefore, as $u_1, v_1\geqslant 0$, we obtain
\begin{align}\label{obbb}
\int_{Q_T}\vert v\vert^p\tilde{\varphi}\leqslant \left(\int_{Q_T}\vert u\vert^q\tilde{\varphi}\right)^{1/q} \mathcal{A},
\end{align}
and
\begin{align}\label{obkk}
\int_{Q_T}\vert u\vert^q\tilde{\varphi}\leqslant \left(\int_{Q_T}\vert v\vert^p\tilde{\varphi}\right)^{1/p} \mathcal{B}.
\end{align}
 Now, combining\,$(\ref{obbb})$\,and\,$(\ref{obkk})$, we write
\begin{align}
\begin{cases}
\bigg(\int_{Q_T}\vert v\vert^p\tilde{\varphi}\bigg)^{1-\frac{1}{pq}}\leqslant\mathcal{B}^{\frac{1}{q}}\mathcal{A},\\
\\
\bigg(\int_{Q_T}\vert u\vert^q\tilde{\varphi}\bigg)^{1-\frac{1}{pq}}\leqslant\mathcal{A}^{\frac{1}{p}}\mathcal{B}.
\end{cases}
\end{align}
Using Lemma\thinspace\ref{kk}, Lemma\thinspace\ref{ui} and making  the change of variables
\begin{align*}
x=\xi T^{\frac{\theta_1}{2}} \hspace*{0.2cm}\text{with}\hspace*{0.2cm} \theta_1=\frac{\alpha_1+1}{\sigma_1}\text{ in}\hspace*{0.2cm} \mathcal{A},
\end{align*}
\begin{align*}
x=\xi T^{\frac{\theta_2}{2}} \hspace*{0.2cm}\text{with}\hspace*{0.2cm} \theta_2=\frac{\beta_1+1}{\sigma_2}\text{ in}\hspace*{0.2cm} \mathcal{B}.
\end{align*}
 We obtain the estimates
\begin{align}\label{poi}
\begin{cases}
\bigg(\int_{Q_T}\vert v\vert^p\tilde{\varphi}\bigg)^{1-\frac{1}{pq}}\leqslant T^{l_1},\\
\\
\bigg(\int_{Q_T}\vert u\vert^q\tilde{\varphi}\bigg)^{1-\frac{1}{pq}}\leqslant T^{l_2},
\end{cases}
\end{align}
where
\begin{align*}
l_1=\bigg({-(2+\beta_1-\gamma_2)+\frac{1}{p'}(\frac{\theta_2 N}{2}+1)}\bigg)\frac{1}{q}-(2+\alpha_1-\gamma_1)+\frac{1}{q'}(\frac{\theta_1 N}{2}+1),
\end{align*}
and
\begin{align*}
l_2=\bigg({-(2+\alpha_1-\gamma_1)+\frac{1}{q'}(\frac{\theta_1 N}{2}+1)}\bigg)\frac{1}{p}-(2+\beta_1-\gamma_2)+\frac{1}{p'}(\frac{\theta_2 N}{2}+1).
\end{align*}
Hence, by taking the limit as $T\rightarrow\infty$ in $(\ref{poi})$, we obtain
\begin{align*}
\begin{cases}
\int_{0}^\infty\int_{\mathbb{R}^N}\vert v\vert^p\tilde{\varphi}<0,\\
\\
\int_{0}^\infty\int_{\mathbb{R}^N}\vert u\vert^q\tilde{\varphi}<0,
\end{cases}
\end{align*}
which is a contradiction. Then $(u,v)$ cannot be a global solution.
\end{proof}
\section{Necessary conditions for local and global existence}

\setcounter{section}{5} \hspace*{0.3cm}In this part, we establish the
necessary conditions for the local and global existence of solutions to
 the problem $(\ref{a})- (\ref{aa})$.

\begin{theo}
Let $u_0=0$ and let $u$\thinspace\ be a local solution to problem $(\ref{a})- (\ref{aa})$, then
exists positive constant $C$, such that 
\begin{equation*}
\inf_{|x|\longrightarrow \infty
}u_{1}(x)\leqslant C T^{-\frac{p}{p-1}(2+\alpha _{1}-\gamma )+(1+\alpha _{1}-\gamma )}.
\end{equation*}
\end{theo}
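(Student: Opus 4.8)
The plan is to rerun the test-function scheme of Section~3, with two modifications: the horizon $T$ is held fixed instead of sent to infinity, and the spatial cutoff is translated toward spatial infinity rather than parabolically rescaled. First I would fix a $T>0$ for which a local solution on $[0,T]$ exists, pick $x_0\in\mathbb{R}^N$ with $|x_0|$ large, and test the equation against
\[
\phi(t,x)=D^{1-\gamma}_{t\vert T}\bigl(\varphi_1(t)\varphi_2(x)\bigr),\qquad \varphi_2(x)=\psi(x-x_0),
\]
with $\varphi_1$ as in $(\ref{ob})$ for $\eta>\frac{p}{p-1}(2+\alpha_1-\gamma)-1$ and $\psi$ as in Lemma~\ref{kk}, but \emph{without} the spatial rescaling used in Section~3. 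Plugging $\phi$ into the weak formulation $(\ref{ak})$ with $u_0=0$ kills the two $u_0$-terms, and commuting the Riemann--Liouville operators by Lemma~\ref{k} exactly as in Section~3 (the boundary terms at $t=T$ vanish because $\varphi_1$ is flat there for $\eta$ large) reduces it, in parallel with $(\ref{p})$, to
\begin{align*}
&\int_{Q_T}|u|^p\tilde{\varphi}+\Bigl(\int_{\mathbb{R}^N}u_1\varphi_2\,dx\Bigr)\int_0^T D^{1+\alpha_1-\gamma}_{t\vert T}\varphi_1\,dt\\
&\quad=\int_{Q_T}u\,\varphi_2\, D^{2+\alpha_1-\gamma}_{t\vert T}\varphi_1+\int_{Q_T}u\,(-\Delta)^{\delta}\varphi_2\, D^{1+\alpha_2-\gamma}_{t\vert T}\varphi_1+\int_{Q_T}u\,(-\Delta)^{\sigma}\varphi_2\, D^{1-\gamma}_{t\vert T}\varphi_1,
\end{align*}
where $\tilde{\varphi}=\varphi_1\varphi_2$ and I used that $u_1$ and $\varphi_2$ are $t$-independent.

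Next I would bound each term on the right by Young's inequality with exponents $p$ and $p'=\frac{p}{p-1}$, absorbing three copies of $\frac{1}{3p}\int_{Q_T}|u|^p\tilde{\varphi}$ into the left side. Lemma~\ref{jii} gives $\int_0^T D^{1+\alpha_1-\gamma}_{t\vert T}\varphi_1\,dt=C\,T^{-(\alpha_1-\gamma)}$ and $\int_0^T\varphi_1^{-p'/p}|D^{\theta}_{t\vert T}\varphi_1|^{p'}\,dt=C\,T^{1-p'\theta}$ for $\theta\in\{2+\alpha_1-\gamma,\,1+\alpha_2-\gamma,\,1-\gamma\}$, and Lemma~\ref{kk} gives $\int_{\mathbb{R}^N}\varphi_2^{-p'/p}|(-\Delta)^{\delta}\varphi_2|^{p'}\,dx\le C$, $\int_{\mathbb{R}^N}\varphi_2^{-p'/p}|(-\Delta)^{\sigma}\varphi_2|^{p'}\,dx\le C$ and $\int_{\mathbb{R}^N}\varphi_2\,dx=\int_{\mathbb{R}^N}\psi=:c_0>0$, all with constants independent of $x_0$ by translation invariance. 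Since $u_1$ is not assumed nonnegative here, I would keep the $u_1$-term on the left and instead discard the nonnegative term $\frac{1}{p'}\int_{Q_T}|u|^p\tilde{\varphi}$, obtaining
\[
C\,T^{-(\alpha_1-\gamma)}\int_{\mathbb{R}^N}u_1\varphi_2\,dx\ \le\ C\bigl(T^{1-p'(2+\alpha_1-\gamma)}+T^{1-p'(1+\alpha_2-\gamma)}+T^{1-p'(1-\gamma)}\bigr).
\]
As $2+\alpha_1-\gamma>1+\alpha_2-\gamma$ (because $\alpha_1>\alpha_2$) and $2+\alpha_1-\gamma>1-\gamma$, for small $T$ — the regime relevant to local existence — the first term dominates, so $\int_{\mathbb{R}^N}u_1\varphi_2\,dx\le C\,T^{-\frac{p}{p-1}(2+\alpha_1-\gamma)+(1+\alpha_1-\gamma)}$.

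To conclude, since $\varphi_2=\psi(\cdot-x_0)\ge0$ is supported in a ball of fixed radius about $x_0$, one has $\bigl(\inf_{x\in\operatorname{supp}\varphi_2}u_1(x)\bigr)c_0\le\int_{\mathbb{R}^N}u_1\varphi_2\,dx$, whence $\inf_{x\in\operatorname{supp}\varphi_2}u_1\le C\,T^{-\frac{p}{p-1}(2+\alpha_1-\gamma)+(1+\alpha_1-\gamma)}$ with $C$ independent of $x_0$; letting $|x_0|\to\infty$ yields the asserted bound on $\inf_{|x|\to\infty}u_1$. I do not expect a single hard obstacle so much as three points needing care: (i) the vanishing of the boundary terms produced by Lemma~\ref{k} at $t=T$, forced by taking $\eta$ large; (ii) resisting the parabolic rescaling $x\mapsto T^{-\theta/2}x$ of Section~3 — it is the translation of a fixed-size cutoff to infinity that converts the estimate into a condition on $u_1$ near infinity instead of at the origin; and (iii) checking that the $D^{2+\alpha_1-\gamma}_{t\vert T}\varphi_1$ contribution dominates the other two, which is what pins down the exponent in the statement.
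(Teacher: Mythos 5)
Your overall scheme---test against $D^{1-\gamma}_{t|T}(\varphi_1\varphi_2)$ with $u_0=0$, apply Young, and localize the resulting bound on $\int u_1\varphi_2$ near spatial infinity---is the same as the paper's, but you localize by \emph{translating} a fixed-size bump $\psi(\cdot-x_0)$ to infinity, whereas the paper \emph{dilates}: it takes $\varphi_2(x)=\psi(x/R)$ with $\psi$ the first Dirichlet eigenfunction of $(-\Delta)^\theta$ on the annulus $\{1<|x|<2\}$, so that $(-\Delta)^{\delta}\varphi_2=\lambda_\delta R^{-2\delta}\psi(x/R)$ and $(-\Delta)^{\sigma}\varphi_2=\lambda_\sigma R^{-2\sigma}\psi(x/R)$. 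This difference is not cosmetic: the dilation puts factors $R^{-2p'\delta}$ and $R^{-2p'\sigma}$ in front of the two lower-order terms $T^{(1+\alpha_1-\gamma)-p'(1+\alpha_2-\gamma)}$ and $T^{(1+\alpha_1-\gamma)-p'(1-\gamma)}$, so letting $R\to\infty$ at \emph{fixed} $T$ annihilates them and leaves only the $T^{(1+\alpha_1-\gamma)-p'(2+\alpha_1-\gamma)}$ term, for every $T$. Your translated cutoff is scale-invariant in $x_0$, so those two terms survive with constants independent of $x_0$, and you are forced to argue that the first term dominates, which is true only for $T\leqslant 1$.

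That restriction is a genuine gap relative to what the theorem is used for. Since $2+\alpha_1-\gamma>1+\alpha_2-\gamma>1-\gamma>0$, for $T>1$ the dominant term in your right-hand side is $T^{(1+\alpha_1-\gamma)-p'(1-\gamma)}$, which exceeds the claimed $T^{(1+\alpha_1-\gamma)-p'(2+\alpha_1-\gamma)}$ by a factor $T^{p'(1+\alpha_1)}\to\infty$; so your argument does not yield the stated inequality with a $T$-independent constant on the whole existence interval. In particular the subsequent Corollary (global solution $\Rightarrow\liminf_{|x|\to\infty}u_1=0$) and the last Theorem of Section~5 both send $T\to\infty$ and need precisely the large-$T$ form of the estimate that your version fails to deliver. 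The fix is exactly the paper's device: keep the spatial parameter $R$ decoupled from $T$, dilate rather than translate so that the fractional Laplacian contributes the decaying factors $R^{-2p'\delta}$, $R^{-2p'\sigma}$, take $R\to\infty$ first, and only then discuss the $T$-dependence. (Your remaining steps---discarding the nonnegative $\int_{Q_T}|u|^p\tilde\varphi$, bounding $\inf_{\mathrm{supp}\,\varphi_2}u_1$ by $c_0^{-1}\int u_1\varphi_2$, and the boundary-term bookkeeping from Lemma~\ref{k}---are sound and match the paper.)
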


\begin{proof}
Using the Young inequality in the right hand side of \;\((\ref{p})\), we obtain
\begin{align*}
&\int_{Q_T}u_1\varphi_2 D^{1+\alpha_1-\gamma}_{t\vert T}\varphi_1\\
&\leqslant\frac{3^{p'-1}}{p'}\int_{Q_T}\varphi_1^{-p'/p}\varphi_2^{-p'/p}\left( \varphi_2^{p'}\vert D^{2+\alpha_1-\gamma}_{t\vert T}\varphi_1\vert^{p'}+\vert(-\Delta)^{\delta}\varphi_2D^{1+\alpha_2-\gamma}_{t\vert T}\varphi_1\vert^{p'}+\vert(-\Delta)^\sigma\varphi_2D^{1-\gamma}_{t\vert T}\varphi_1\vert^{p'}\right).
\end{align*}
 Let $\psi$ the first eigenfunction of $(-\Delta)^\theta$ with $\lambda_\theta $  the first eigenvalue on $\Omega$\,\cite{vp}
\begin{equation}
\begin{cases}
(-\Delta)^\theta\psi(x)=\lambda_\theta \psi(x), \hspace*{0.5cm}&x\in \Omega,\\
\psi(x)=0, \hspace*{0.3cm}& x\in \mathbb{R}^N\setminus \Omega,\\
\psi(x)\geqslant0, \hspace*{0.3cm}& x\in \mathbb{R}^N,\\
\int_\Omega\psi(x)dx=1,
\end{cases}
\end{equation}
where $\Omega:=\lbrace x\in\mathbb{R}^N\hspace*{0.1cm}:\hspace*{0.1cm}1< \vert x\vert <2\rbrace $ and $\theta=\lbrace \sigma, \delta\rbrace$.\\
 Choose $\varphi_2(x)=\psi(x/R)$ and $\xi=x/R$, so that $(-\Delta)^\theta\varphi_2(x)=(-\Delta)^\theta \psi(x/R)=R^{-2\theta}(-\Delta)^\theta\psi(\xi)$, we obtain
\begin{align*}
T^{1-(1+\alpha_1-\gamma)}\int_{\Omega}u_1(R\xi)\psi(\xi) 
\leqslant C\left(T^{1-p'(2+\alpha_2-\gamma)}+\lambda_\delta R^{-2p'\delta}T^{1-p'(1+\alpha_2-\gamma)}+\lambda_\sigma R^{-2p'\sigma}T^{1-p'(1-\gamma)} \right)\int_{\Omega} \psi(\xi) .
\end{align*}
Thus
\begin{align}\label{kir}
&T^{-(1+\alpha_1-\gamma)}\inf_{\vert \xi\vert>1 }u_1(R\xi) \int_{\Omega}\psi(\xi)\nonumber\\
&\leqslant C\left( T^{-p'(2+\alpha_1-\gamma)}+R^{-2p'\delta}T^{-p'(1+\alpha_2-\gamma)}+R^{-2\sigma p'}T^{-p'(1-\gamma)}\right) \int_{\Omega} \psi(\xi).
\end{align}
By passing to the limit in\;$(\ref{kir})$, as\,$R\rightarrow \infty$, we get 
\begin{align}\label{kira}
T^{-(1+\alpha_1-\gamma)}\inf_{\vert x\vert\longrightarrow\infty }u_1(x)\leqslant CT^{-p'(2+\alpha_1-\gamma)}.
\end{align}
\end{proof}

\begin{corol}
Assume that \,$\lim_{\vert x\vert\longrightarrow\infty}\inf u_1(x)=+\infty$, then
problem $(\ref{a})- (\ref{aa})$ does not admit any local solution.
\end{corol}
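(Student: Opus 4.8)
The plan is to read off the statement directly from the quantitative necessary condition established in the preceding Theorem, arguing by contradiction. Suppose, contrary to the claim, that problem (\ref{a})--(\ref{aa}) admits a local weak solution $u$ on some slab $Q_T=[0,T]\times\mathbb{R}^N$ with $T>0$, in the sense of Definition \ref{ab}. Then the preceding Theorem applies and produces a constant $C>0$, independent of $u$, such that
\[
\inf_{\vert x\vert\longrightarrow\infty}u_1(x)\;\leqslant\;C\,T^{-\frac{p}{p-1}(2+\alpha_1-\gamma)+(1+\alpha_1-\gamma)}.
\]

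Since $p>1$ and $0<\gamma<\alpha_1<1$, the exponent $-\frac{p}{p-1}(2+\alpha_1-\gamma)+(1+\alpha_1-\gamma)$ is a finite (indeed negative) real number, so for the fixed $T$ attached to the assumed local solution the right-hand side above is a finite quantity. Hence $\liminf_{\vert x\vert\to\infty}u_1(x)<\infty$, which is incompatible with the hypothesis $\lim_{\vert x\vert\longrightarrow\infty}\inf u_1(x)=+\infty$. This contradiction shows that no local solution can exist, which is precisely the assertion.

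I do not anticipate any genuine obstacle: all the analytic work — the test-function and rescaling construction together with the use of Lemmas \ref{k}, \ref{jii} and \ref{kk} — has already been carried out in the proof of the preceding Theorem, and the corollary is merely its contrapositive evaluated at a fixed finite time horizon. The only point deserving a line of justification is the finiteness of $C\,T^{-\frac{p}{p-1}(2+\alpha_1-\gamma)+(1+\alpha_1-\gamma)}$ for fixed $T>0$, which is immediate from the admissible ranges of $p$, $\gamma$ and $\alpha_1$; if one prefers, one may additionally note that the estimate of the preceding Theorem holds for every $T$ in the existence interval of $u$, so that the bound on $\liminf_{\vert x\vert\to\infty}u_1(x)$ stays finite even after letting $T$ vary, again contradicting the hypothesis.
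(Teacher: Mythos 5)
Your proof is correct and matches the paper's (implicit) intent: the paper states this corollary without proof as an immediate consequence of the preceding theorem, and your contradiction argument — a local solution would force $\liminf_{\vert x\vert\to\infty}u_1(x)$ to be bounded by the finite quantity $C\,T^{-\frac{p}{p-1}(2+\alpha_1-\gamma)+(1+\alpha_1-\gamma)}$, contradicting the hypothesis — is exactly that consequence spelled out.
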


\begin{corol}
Suppose that problem\,$(\ref{a})- (\ref{aa})$ has a global solution, then $\lim_{\vert
x\vert\longrightarrow\infty}\inf u_1(x)=0$.
\end{corol}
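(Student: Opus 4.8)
The plan is to read off the corollary (and, along the way, the preceding one) directly from the estimate of the previous theorem. First note that a global solution of $(\ref{a})-(\ref{aa})$ is, by definition, a local solution on $Q_T=[0,T]\times\mathbb{R}^N$ for every $T>0$; moreover, inspecting the proof just given one sees that the constant $C$ there depends only on $\eta$, $p$, $N$, $\sigma$, $\delta$ and on the first eigenvalues $\lambda_\sigma,\lambda_\delta$, but not on $T$, since every power of $T$ has been factored out explicitly through Lemma \ref{jii} and Lemma \ref{kk}. Hence the bound
\begin{equation*}
\inf_{|x|\longrightarrow\infty}u_1(x)\leqslant C\,T^{-\frac{p}{p-1}(2+\alpha_1-\gamma)+(1+\alpha_1-\gamma)}
\end{equation*}
holds for all $T>0$ with one and the same constant $C$.

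Second, I would check that the exponent on the right-hand side is strictly negative. Since $p>1$ we have $\frac{p}{p-1}>1$, and since $0<\gamma<\alpha_1<1$ we have $2+\alpha_1-\gamma>1+\alpha_1-\gamma>1>0$; therefore
\begin{equation*}
\frac{p}{p-1}\,(2+\alpha_1-\gamma)>2+\alpha_1-\gamma>1+\alpha_1-\gamma,
\end{equation*}
so that $-\frac{p}{p-1}(2+\alpha_1-\gamma)+(1+\alpha_1-\gamma)<0$ and the right-hand side above tends to $0$ as $T\to+\infty$. Letting $T\to+\infty$ then yields $\liminf_{|x|\to\infty}u_1(x)\leqslant 0$; combined with the standing assumption $u_1\geqslant 0$, this forces $\liminf_{|x|\to\infty}u_1(x)=0$, which is the assertion. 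The first corollary is the contrapositive reading of the same bound: if $\liminf_{|x|\to\infty}u_1(x)=+\infty$, the left-hand side is infinite while the right-hand side is finite for each fixed $T$, so no local solution can exist.

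The argument is essentially immediate once the previous theorem is available, so there is no serious obstacle; the one point that must be handled with care is precisely the $T$-independence of the constant $C$. Concretely, after the substitution $\xi=x/R$ in the space integrals and the use of Lemma \ref{jii} for the time factors $D_{t|T}^{\theta}\varphi_1$, one must verify that all dependence on $T$ collapses into the single power displayed above, the residual multiplicative constant being built only from $\Gamma$-factors in $\eta,\theta$ and from the constants $C_1,C_2$ of Lemma \ref{kk} together with $\lambda_\sigma,\lambda_\delta$. Granting this bookkeeping, passing to the limit $T\to+\infty$ (and, for the first corollary, reading the inequality in reverse) completes both statements.
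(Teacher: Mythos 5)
Your argument is correct and is essentially the paper's: both rest on the estimate $\inf_{|x|\to\infty}u_1\leqslant C\,T^{(1+\alpha_1-\gamma)-p'(2+\alpha_1-\gamma)}$ from the preceding theorem, the observation that the exponent of $T$ is strictly negative, and the passage $T\to\infty$ (the paper phrases this as a contradiction from assuming $\liminf u_1>0$, you argue directly and then invoke $u_1\geqslant 0$ to upgrade $\liminf\leqslant 0$ to $=0$). The only difference is cosmetic, and your explicit attention to the $T$-independence of $C$ and to the sign of the exponent is if anything more careful than the original.
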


\begin{proof}
 The proof proceeds by contradiction. Suppose 
 
 \begin{align*}
C= \lim_{\vert x\vert\longrightarrow\infty}\inf u_1>0.
 \end{align*}
Using $(\ref{kira})$ for $T>1$, we get  
 \begin{align*}
T^{-(1+\alpha_1-\gamma)+p'(2+\alpha_1-\gamma)}\leqslant C.
 \end{align*}
We get a contradiction when $T\rightarrow\infty$ .
 \end{proof}

\begin{theo}
Suppose\thinspace $(\ref{a})- (\ref{aa})$ admit a global weak solution, then  exists
 positive constant\thinspace $C$\thinspace\ such that\newline

\begin{equation*}
\lim_{|x|\longrightarrow \infty }\left[ \inf u_{1}\vert x\vert^{\frac{2\delta}{1+\alpha_1}[p'(2+\alpha_1-\gamma)-(1+\alpha_1-\gamma)]}
\right] \leqslant C.
\end{equation*}
\end{theo}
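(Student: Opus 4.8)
Assuming that $(\ref{a})-(\ref{aa})$ has a global weak solution, the plan is to re-use inequality $(\ref{kir})$ --- obtained in the proof of the preceding theorem from the test function $\phi=D^{1-\gamma}_{t|T}\big(\varphi_1(t)\,\psi(x/R)\big)$, with $\varphi_1$ as in $(\ref{ob})$ (for $\eta$ large) and $\psi$ the first eigenfunction of $(-\Delta)^{\theta}$ on the annulus $\Omega=\{\,1<|\xi|<2\,\}$, after inserting $\phi$ into $(\ref{ak})$, applying Lemma~\ref{k} to reach $(\ref{p})$, dropping $\int_{Q_T}|u|^{p}\tilde\varphi\geqslant 0$, using Young's inequality, and estimating the time integrals by Lemma~\ref{jii} and the spatial ones through $(-\Delta)^{\theta}\psi(x/R)=R^{-2\theta}\lambda_{\theta}\psi(x/R)$ --- but this time, instead of letting $R\to\infty$ with $T$ fixed, I would couple $R$ and $T$. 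Because $u_0=0$, $u_1\geqslant 0$, $\psi\geqslant 0$ and $\int_{\Omega}\psi=1$, inequality $(\ref{kir})$ already yields
\begin{equation*}
\inf_{|\xi|>1}u_1(R\xi)\;\leqslant\;C\Big(T^{-p'(2+\alpha_1-\gamma)+(1+\alpha_1-\gamma)}+R^{-2p'\delta}\,T^{-p'(1+\alpha_2-\gamma)+(1+\alpha_1-\gamma)}+R^{-2\sigma p'}\,T^{-p'(1-\gamma)+(1+\alpha_1-\gamma)}\Big).
\end{equation*}

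Next I would set $R=T^{\theta/2}$ with $\theta=\dfrac{1+\alpha_1}{\delta}$, turning the three terms into $T^{A_1},T^{A_2},T^{A_3}$, where $A_1=-p'(2+\alpha_1-\gamma)+(1+\alpha_1-\gamma)$, $A_2=-\theta p'\delta-p'(1+\alpha_2-\gamma)+(1+\alpha_1-\gamma)$ and $A_3=-\theta p'\sigma-p'(1-\gamma)+(1+\alpha_1-\gamma)$. Since $\theta\delta=1+\alpha_1$, a one-line computation gives $A_2-A_1=-p'\alpha_2<0$ and $A_3-A_1=p'(1+\alpha_1)\big(1-\tfrac{\sigma}{\delta}\big)$, so $A_1$ is the largest exponent and $\inf_{|\xi|>1}u_1(R\xi)\leqslant C\,T^{A_1}$ for $T\gg 1$. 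On $\Omega$ one has $|x|=R|\xi|\asymp R=T^{\theta/2}$, hence $T\asymp|x|^{2/\theta}=|x|^{2\delta/(1+\alpha_1)}$, and since $-A_1=p'(2+\alpha_1-\gamma)-(1+\alpha_1-\gamma)$, substituting gives $\inf u_1\cdot|x|^{\frac{2\delta}{1+\alpha_1}[-A_1]}\leqslant C$ uniformly; letting $|x|\to\infty$ (equivalently $R,T\to\infty$) produces
\begin{equation*}
\lim_{|x|\longrightarrow\infty}\left[\inf u_1\,|x|^{\frac{2\delta}{1+\alpha_1}[p'(2+\alpha_1-\gamma)-(1+\alpha_1-\gamma)]}\right]\leqslant C,
\end{equation*}
which is the assertion.

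The passage to $(\ref{kir})$ is nothing new, and the exponent bookkeeping above is routine; the one delicate point --- and the real obstacle --- is guaranteeing that $A_1$, not $A_2$ or $A_3$, controls the right-hand side once the scaling is fixed. The term $(-\Delta)^{\delta}\mathbf{D}^{\alpha_2}_{0|t}u$ is harmless because $\alpha_2>0$ forces $A_2<A_1$ automatically; but the term $(-\Delta)^{\sigma}u$ injects the factor $1-\sigma/\delta$ into $A_3-A_1$, so the choice $\theta=(1+\alpha_1)/\delta$ works only when $\sigma\geqslant\delta$ (the regime underlying the D'Abbicco--Ebert estimate recalled in the introduction). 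If instead one optimises over all admissible scalings $R=T^{\rho}$, this term becomes the binding constraint, forcing $\rho\geqslant(1+\alpha_1)/(2\sigma)$ and hence the exponent $\tfrac{2\sigma}{1+\alpha_1}[p'(2+\alpha_1-\gamma)-(1+\alpha_1-\gamma)]$ with $\sigma$ in place of $\delta$; so some care is needed to keep the hypotheses on $\sigma,\delta$ and the conclusion consistent. In every case the argument closes with the elementary remark that, thanks to $\psi\geqslant 0$ and $\int_{\Omega}\psi=1$, inequality $(\ref{kir})$ really bounds $\inf_{|y|\asymp R}u_1(y)$ itself and not merely a weighted average of $u_1$.
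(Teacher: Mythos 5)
Your route is essentially the paper's: reuse inequality $(\ref{kir})$ and then couple $R$ and $T$; your choice $T\asymp R^{2\delta/(1+\alpha_1)}$ is exactly the minimizer $T^{\natural}$ that the paper computes after first reducing the right-hand side of $(\ref{kir})$ to two terms. The exponent bookkeeping you carry out ($A_2-A_1=-p'\alpha_2<0$, $A_3-A_1=p'(1+\alpha_1)(1-\sigma/\delta)$) is correct, and the ``delicate point'' you flag is a genuine one --- but it is a defect of the paper's own proof rather than of your reasoning. In passing from $(\ref{kir})$ to the two-term bound, the paper silently replaces $R^{-2\sigma p'}T^{(1+\alpha_1-\gamma)-p'(1-\gamma)}$ by $R^{-2\delta p'}T^{(1+\alpha_1-\gamma)-p'(1-\gamma)}$, which requires $R^{-2\sigma p'}\leqslant R^{-2\delta p'}$; for $R>1$ this holds only if $\sigma\geqslant\delta$, whereas the standing hypothesis of the paper is $0<\sigma<\delta<1$. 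Under that hypothesis your computation shows the term coming from $(-\Delta)^{\sigma}u$ dominates, the binding scaling is $T\asymp R^{2\sigma/(1+\alpha_1)}$, and (as you note, and as one checks the middle term remains subordinate since $\sigma\frac{1+\alpha_1-\alpha_2}{1+\alpha_1}<\delta$) the decay exponent one actually obtains is $\frac{2\sigma}{1+\alpha_1}\left[p'(2+\alpha_1-\gamma)-(1+\alpha_1-\gamma)\right]$, i.e.\ the theorem's conclusion with $\sigma$ in place of $\delta$. So your proposal proves the theorem as literally stated only in the regime $\sigma\geqslant\delta$, and proves the corrected ($\sigma$-exponent) version in the paper's regime; since the paper's proof tacitly makes the same illegitimate domination, your more careful account is the one to trust. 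The only cosmetic remark: the final step should be phrased, as the paper does, via $\inf_{|x|>R}u_1$ and $|x|\asymp R$ on the annulus, using $\psi\geqslant0$ and $\int_{\Omega}\psi=1$ --- which you do state at the end.
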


\begin{proof}
Using the relation\,$(\ref{kir})$, we have

 \begin{align*}
 \inf_{\vert \xi\vert>1 }u_1(R\xi) \int_{\Omega}\psi(\xi)\leqslant C\bigg{(} T^{(1+\alpha_1-\gamma)-p'(2+\alpha_1-\gamma)}&+R^{-2\delta p'}T^{(1+\alpha_1-\gamma)-p'(1+\alpha_2-\gamma)}\\
 &+R^{-2\sigma p'}T^{(1+\alpha_1-\gamma)-p'(1-\gamma)}\bigg{)} \int_{\Omega}\psi(\xi),
 \end{align*}
 which implies that
 \begin{align*}
 \inf_{\vert x\vert>R }u_1(x) \int_{\Omega}\psi(\xi)&\leqslant CT^{(1+\alpha_1-\gamma)-p'(2+\alpha_1-\gamma)}+ T^{(1+\alpha_1-\gamma)-p'(1-\gamma)}R^{-2\delta p'}\int_{\Omega}\psi(\xi). 
 \end{align*}
 The right-hand side have a minimum at
 \[T^{\natural}=\left[\frac{p'(2+\alpha_1-\gamma)-(1+\alpha_1-\gamma)}{(1+\alpha_1-\gamma)-p'(1-\gamma)}\right]^{\frac{1}{p'(1+\alpha_1)}} R^{\frac{2\delta}{1+\alpha_1}},\]
whereupon
\begin{align*}
 &\inf_{\vert x\vert>R }\left[u_1\vert x\vert^{\frac{2\delta}{1+\alpha_1}[p'(2+\alpha_1-\gamma)-(1+\alpha_1-\gamma)]}\right] \int_{\Omega}\vert R\xi\vert^{-\frac{2\delta}{1+\alpha_1}[p'(2+\alpha_1-\gamma)-(1+\alpha_1-\gamma)]}\psi(\xi)\\&\leqslant C2^{\frac{2\delta}{1+\alpha_1}[p'(2+\alpha_1-\gamma)-(1+\alpha_1-\gamma)]}\int_{\Omega}\vert R\xi\vert^{-\frac{2\delta}{1+\alpha_1}[p'(2+\alpha_1-\gamma)-(1+\alpha_1-\gamma)]}\psi(\xi). 
 \end{align*}
 Finally, dividing by
 \begin{align*}
 \int_{\Omega}\vert  R\xi\vert^{-\frac{2\delta}{1+\alpha_1}[p'(2+\alpha_1-\gamma)-(1+\alpha_1-\gamma)]}\psi(\xi)>0,
 \end{align*}
 we obtain
 \begin{equation*}
\lim_{|x|\longrightarrow \infty }\left[ \inf u_{1}\vert x\vert^{\frac{2\delta}{1+\alpha_1}[p'(2+\alpha_1-\gamma)-(1+\alpha_1-\gamma)]}
\right] \leqslant C.
\end{equation*}
 \end{proof}
 \it
 \noindent
 
\end{document}